\newtheorem{theorem}{\bf Theorem}[section]
\newtheorem{definition}[theorem]{\bf Definition}
\newtheorem{corollary}[theorem]{\bf Corollary}
\newtheorem{proposition}[theorem]{\bf Proposition}
\newtheorem{question}[theorem]{\bf Question}
\newtheorem{conjecture}[theorem]{\bf Conjecture}
\newcommand{\rme}{\mathrm{e}}
\newcommand{\rmi}{\mathrm{i}}
\newcommand{\defeq}{\mathrel{\mathop:}=}
\begin{document}

\title{Twisting and satellite operations on P-fibered braids}

\author{
Benjamin Bode}
\date{}

\address{Department of Mathematics, Osaka University, Toyonaka, Osaka 560-0043, Japan}
\email{ben.bode.2013@my.bristol.ac.uk}




\maketitle
\begin{abstract}
A geometric braid $B$ can be interpreted as a loop in the space of monic complex polynomials with distinct roots. This loop defines a function $g:\mathbb{C}\times S^1\to\mathbb{C}$ that vanishes on $B$. We define the set of \textit{P-fibered} braids as those braids that can be represented by loops of polynomials such that the corresponding function $g$ induces a fibration $\arg g:(\mathbb{C}\times S^1)\backslash B\to S^1$. We show that a certain satellite operation produces new P-fibered braids from known ones. We also prove that any braid $B$ with $n$ strands, $k_-$ negative and $k_+$ positive crossings can be turned into a P-fibered braid (and hence also into a braid whose closure is fibered) by adding at least $\tfrac{k_-+1}{n}$ negative or $\tfrac{k_+ +1}{n}$ positive full twists to it.
\end{abstract}
%

\section{Introduction}\label{sec:intro}

We consider a geometric braid $B$ on $n$ strands as the union of $n$ parametric curves. If the closure of $B$ has $m$ components $\{C_i\}_{i=1,2,\ldots,m}$ made up of $n_i$ strands, we can arrange $B$ to be of the form 
\begin{equation}
\label{eq:braidpara}
\bigcup_{i=1}^{m}\bigcup_{j=1}^{n_i}(z_{i,j}(t),t)\subset\mathbb{C}\times[0,2\pi],\quad t\in[0,2\pi],
\end{equation}
where $z_{i,j}:[0,2\pi]\to\mathbb{C}$ are smooth functions, parametrising the $j$th strand of the component $C_i$, such that for any $t\in[0,2\pi]$ an equality $z_{i,j}(t)=z_{i',j'}(t)$ implies $i=i'$ and $j=j'$ and furthermore, for all $i=1,2,\ldots,m$, and $j=1,2,\ldots,n_i$, there is a unique $k=1,2,\ldots,n_i$, such that $z_{i,j}(2\pi)=z_{i,k}(0)$ and such that the resulting permutation $j\mapsto k$ on $n_i$ points is cyclic. When we refer to a geometric braid $B$ like this, we mean this particular parametrisation, in particular, a choice of labeling the components $C_i$ and the strands $j=1,2,\ldots,n_i$ that is consistent with the conditions on $z_{i,j}$.

Given such a parametrisation we can define the function $g:\mathbb{C}\times[0,2\pi]\to\mathbb{C}$,
\begin{equation}
\label{eq:defg}
g(u,t)=\prod_{i=1}^m\prod_{j=1}^{n_i}(u-z_{i,j}(t)).
\end{equation}
Note that for any value of $t\in[0,2\pi]$, the function $g_t\defeq g(\cdot,t):\mathbb{C}\to\mathbb{C}$ is a complex polynomial in one variable, whose roots are precisely the positions of the $n$ strands of the given braid at the height $t$. Therefore, as $t$ varies from $0$ to $2\pi$, the roots of $g$ trace out the braid $B$ in the parametrisation given in Eq. (\ref{eq:braidpara}). It follows that $g_0=g_{2\pi}$ and we can thus regard a braid as a loop $g_t$, $t\in S^1$ in the space of monic polynomials with distinct roots.


\begin{definition}
A geometric braid $B$ as in Eq. (\ref{eq:braidpara}) is called \textbf{P-fibered} (P for polynomial) if the corresponding function $g$ as in Eq. (\ref{eq:defg}) induces a fibration via 
\begin{equation}
\arg g=g/|g|:(\mathbb{C}\times S^1)\backslash B\to S^1.
\end{equation}

A braid type is called \textit{P-fibered} if it can be represented by a P-fibered geometric braid.
\end{definition}
Note that for every monic complex polynomial $p:\mathbb{C}\to\mathbb{C}$ and every $\chi\in[0,2\pi]$ we have 
\begin{equation}
\label{eq:limit}
\lim_{r\to\infty}\arg p(r\rme^{\rmi \chi})=\rme^{\rmi\chi \deg p }.
\end{equation} 
It follows that we can embed $\mathbb{C}\times S^1$ in $S^3$ such that the fibration extends to $S^3\backslash L$, where $L$ is the closure of the P-fibered braid $B$. Hence closures of P-fibered braids are fibered links in $S^3$. However, it is (to our knowledge) an open problem if every fibered link is the closure of a P-fibered braid.

Although the term `P-fibered braid' has not been used before, the concept has already been applied to the construction of \textit{real algebraic links} in \cite{bode:real, adicact}. These are the real analogue of Milnor's algebraic links \cite{milnor}, that is, they are links of isolated singularities of real polynomial maps from $\mathbb{R}^4$ to $\mathbb{R}^2$.

In this paper, we study satellite operations on P-fibered braids. For a given geometric braid $B$ on $n$ strands, whose closure has components $C_1, C_2,\ldots C_m$, $m\in\mathbb{N}$, we can choose geometric braids $B_1, B_2,\ldots, B_m$ and form the satellite link $L(B;B_1,B_2,\ldots,B_m)$ by replacing a tubular neighbourhood of the component $C_i$ by a solid torus containing the (closed) braid $B_i$. This should be done without any twisting, that is, the images of $\{0\}\times S^1\subset\mathbb{C}\times S^1$, which is identified with $C_i$, and of $\{1\}\times S^1\subset\mathbb{C}\times S^1$ should have a linking number equal to the self-linking number of $C_i$ using the blackboard framing. We denote by $m_i$ the number of components of the closure of $B_i$ and by $n_{i,k}$ the number of strands of the $k$th component of the closure of $B_i$. The braid $B$ has a geometric braid representative given by Eq. (\ref{eq:braidpara}) and $B_i$ has a geometric braid representative parametrised by 
\begin{equation}
\bigcup_{k=1}^{m_i}\bigcup_{\ell=1}^{n_{i,k}}(z_{k,\ell}^i(t),t)\subset\mathbb{C}\times[0,2\pi],\quad t\in[0,2\pi],
\end{equation}
where  $z_{k,\ell}^i:[0,2\pi]\to\mathbb{C}$ is a smooth function, the parametrisation of the $\ell$th strand of the $k$th component of the closure of $B_i$. Since they parametrise a geometric braid, the functions $z_{k,\ell}^i$ satisfy the same conditions as $z_{i,j}$ above. Then the satellite $L(B;B_1,B_2,\ldots,B_m)$ is by definition the closure of the geometric braid 
\begin{equation}
\mathcal{B}(B;B_1,B_2,\ldots,B_m)\defeq\bigcup_{i=1}^m\bigcup_{j=1}^{n_i}\bigcup_{k=1}^{m_i}\bigcup_{\ell=1}^{n_{i,k}}\left(z_{i,j}(t)+\varepsilon z_{k,\ell}^{i}\left(\frac{t+2\pi (j-1)}{n_i}\right),t\right)\subset\mathbb{C}\times[0,2\pi],\quad t\in[0,2\pi],
\end{equation}
with $\varepsilon>0$ chosen sufficiently small. The corresponding loop in the space of monic complex polynomials with distinct roots is thus
\begin{equation}
\label{eq:satbraid}
G_{t,\varepsilon}(u)\defeq G_{\varepsilon}(u,t)\defeq\prod_{i=1}^m\prod_{j=1}^{n_i}\prod_{k=1}^{m_i}\prod_{\ell=1}^{n_{i,k}}\left(u-\left(z_{i,j}(t)+\varepsilon z_{k,\ell}^{i}\left(\frac{t+2\pi (j-1)}{n_i}\right)\right)\right).
\end{equation}

An example of this operation is depicted in Figure \ref{fig:def}. Note that the braid type of $\mathcal{B}(B;B_1,B_2,\ldots,B_m)$ depends on the precise parametrisations of $B$ and $B_i$, $i=1,2,\ldots,m$, and in particular on the labelling $i,j$ of the strands of $B$, while the link type of its closure $L(B;B_1,B_2,\ldots,B_m)$ only depends on the labelling of the components of the closure of $B$.

\begin{figure}
\labellist
\Large
\pinlabel a) at 100 3700
\pinlabel b) at 1500 3700
\pinlabel c) at 3200 3700
\pinlabel d) at 450 2000
\pinlabel e) at 2800 2000
\endlabellist
\centering
\includegraphics[height=10cm]{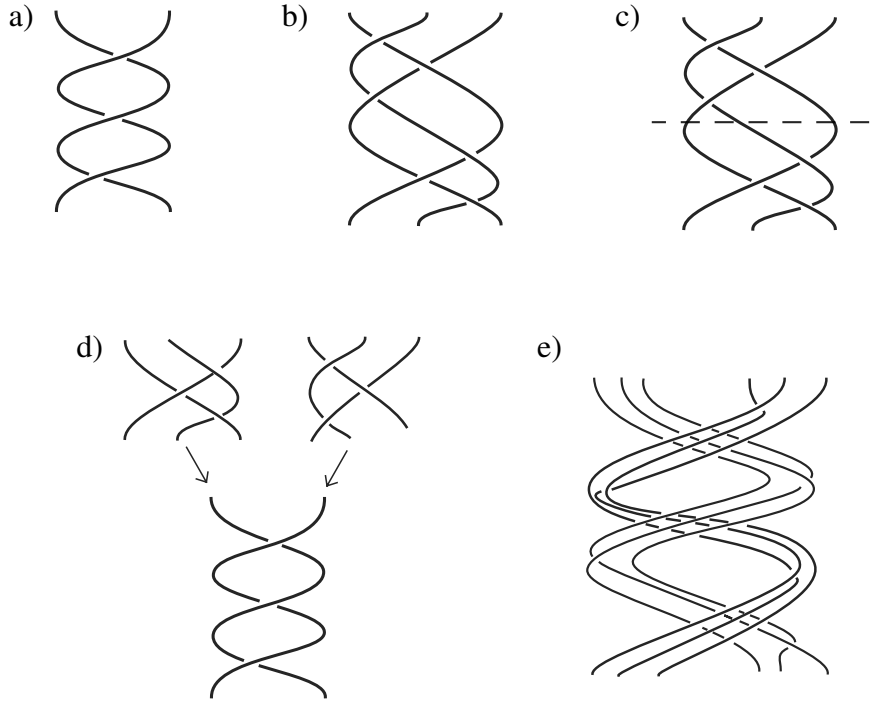}
\caption{An example of the satellite operation. a) The braid $B=\sigma_1^3$ on 2 strands closing to the trefoil knot. b) The braid $B_1=(\sigma_2^{-1}\sigma)^3$, which closes to the Borromean rings. Both of them are P-fibered braids. c) Since the closure of $B$ has one component made up of 2 strands, we divide $B_1$ into two vertical halves and perform the satellite operation as indicated in part d) of the figure. e) The resulting satellite braid $\mathcal{B}(B;B_1)$. \label{fig:def}}
\end{figure}


We obtain a braid diagram from a geometric braid by projecting the strands on the plane $\text{Im}(u)=0$. At a crossing for some value of $t$, we choose the strand with lower $\text{Im}(z_{i,j}(t))$ to be the overpassing strand. Note that this choice of sign is quite arbitrary. If $B$ is a P-fibered braid, then its mirror image is parametrised by the complex conjugates $(\overline{z}_{i,j}(t),t)$ and is hence also P-fibered. From a braid diagram we obtain a braid word in the Artin generators $\sigma_i$, $i=1,2,\ldots,n-1$, and their inverses. We typically read off crossings from a braid diagram from the bottom to the top, i.e., along the direction of increasing $t$.

For a braid $B$ we denote by $B^r$, $r\in\mathbb{N}$, the $r$th power of $B$ in the braid group, that is, the concatenation of $r$ copies of $B$. On the level of braid words, this means that the braid word of $B$ is repeated $r$ times. We write $B^{-r}$ for the $r$th power of $B^{-1}$. Note that if $B$ is a P-fibered braid, then $B^r$ is also P-fibered for all non-zero $r\in\mathbb{Z}$.

The main result of  this paper is as follows.

\begin{theorem}
\label{thm:main}
Let $B$ be a P-fibered braid whose closure has $m$ components. Let $B_1, B_2, \ldots, B_m$ be P-fibered braids with notations as above and such that they all have the same number of strands $s$, i.e., $s=\sum_{k=1}^{m_i} n_{i,k}$ for all $i=1,2,\ldots,m$. Then there are natural numbers $q_i$, $i=1,2,\ldots,m$, such that $\mathcal{B}(B;B_1^{r_1},B_2^{r_2},\ldots,B_m^{r_m})$ is a P-fibered braid for all integers $r_i$ with $|r_i|\geq q_i$.
\end{theorem}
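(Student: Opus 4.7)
The plan is to verify the fibration condition $d(\arg G_\varepsilon) \neq 0$ directly on the complement of $\mathcal{B} \defeq \mathcal{B}(B; B_1^{r_1},\ldots,B_m^{r_m})$, choosing $\varepsilon > 0$ small first and each $|r_i|$ large afterwards. Fix $\eta > 0$ so small that the open tubular neighborhoods $T_i = \{(u,t): |u - z_{i,j}(t)| < \eta \text{ for some } j\}$ of the components $C_i$ of the closure of $B$ are pairwise disjoint, and split the ambient space into the exterior $\mathcal{E} = (\mathbb{C}\times S^1)\setminus \bigcup_i T_i$ and the interiors $T_i$. On $\mathcal{E}$, the telescoping identity
\begin{equation*}
G_\varepsilon(u,t) = g(u,t)^s \prod_{i,j,k,\ell}\left(1 - \varepsilon \,\frac{z^i_{k,\ell}((t+2\pi(j-1))/n_i)}{u - z_{i,j}(t)}\right)
\end{equation*}
yields $G_\varepsilon = g^s\,(1 + O(\varepsilon))$ with the same control on gradients. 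Since $B$ is P-fibered, $|d(\arg g)|$ has a positive lower bound on each compact part of $\mathcal{E}$, and the asymptotic $\arg g \sim n \arg u$ handles the neighborhood of infinity, so $d(\arg G_\varepsilon) \neq 0$ on $\mathcal{E}$ once $\varepsilon$ is sufficiently small. The same estimate extends to the outer annular region $\{R\varepsilon \leq |u - z_{i,j}(t)| < \eta\}$ of each $T_i$ provided $R$ is chosen large.

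On the inner part of $T_i$, that is, $\{|u - z_{i,j}(t)| \leq R\varepsilon \text{ for some } j\}$, I would pass to the local coordinates $(v, \tau) = ((u - z_{i,j}(t))/\varepsilon,\, (t + 2\pi(j-1))/n_i)$, which identify this region with $\{|v| \leq R\} \times S^1$. Separating the factors with $(i',j')=(i,j)$ in~\eqref{eq:satbraid} from the rest yields
\begin{equation*}
G_\varepsilon(u,t) = \varepsilon^s\, g^{i,r_i}(v,\tau)\, H_i(v,\tau,\varepsilon),
\end{equation*}
where $g^{i,r_i}(v,\tau) = g^i(v,\, r_i\tau \bmod 2\pi)$ is the loop polynomial of the P-fibered braid $B_i^{r_i}$ and $H_i(v,\tau,0) = \prod_{(i',j')\neq(i,j)}(z_{i,j}(t) - z_{i',j'}(t))^s$ depends only on $\tau$. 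Hence $\partial_v \arg H_i = O(\varepsilon)$ while $\partial_\tau \arg H_i$ is bounded independently of $\varepsilon$ and $r_i$, whereas
\begin{equation*}
d(\arg g^{i,r_i}) = \nabla_v \arg g^i(v, r_i\tau)\, dv + r_i\, \partial_\tau \arg g^i(v, r_i\tau)\, d\tau.
\end{equation*}
Because the change of coordinates is a diffeomorphism, the fibration condition on $T_i$ is equivalent to $d(\arg g^{i,r_i}) + d(\arg H_i) \neq 0$ in the $(v,\tau)$-frame.

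The P-fibered property of $B_i$, together with a standard compactness argument (the gradient $|\nabla \arg g^i|$ blows up near $B_i$ and is continuous and nowhere zero on the complement), supplies a uniform lower bound $|\nabla_{(v,\tau)} \arg g^i| \geq \delta_i > 0$ on $\{|v| \leq R\} \times S^1$ minus $B_i$. This produces the key dichotomy at each $(v,\tau)$ of interest: either $|\nabla_v \arg g^i(v, r_i\tau)| \geq \delta_i/\sqrt{2}$, in which case the $dv$-component of $d(\arg G_\varepsilon)$ dominates its $O(\varepsilon)$ error; or $|\partial_\tau \arg g^i(v, r_i\tau)| \geq \delta_i/\sqrt{2}$, in which case the $d\tau$-component $r_i\partial_\tau \arg g^i + O(1)$ is nonzero once $|r_i|$ exceeds a threshold determined by the $O(1)$ bound on $\partial_\tau \arg H_i$.

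The principal technical obstacle is the quantitative order of quantifiers. One must first choose $\eta$, $R$ and $\varepsilon > 0$ so that the exterior and outer-annular approximations hold and the $O(\varepsilon)$ error on the $dv$-side of the inner region is dominated by $\delta_i/\sqrt{2}$; only afterward can each $q_i$ be selected, depending on $\varepsilon$, on $\delta_i$ and on the sup of $|\partial_\tau \arg H_i|$, so that $|r_i| \geq q_i$ activates the $\partial_\tau$-case of the dichotomy wherever required. The linear scaling of $\partial_\tau g^{i,r_i}$ with $r_i$, together with the boundedness of the $\tau$-error in $\arg H_i$, is exactly what makes such thresholds $q_i$ exist; since the argument uses only $|r_i|$, the statement for negative twistings follows identically from that for positive ones.
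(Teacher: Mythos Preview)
Your overall architecture matches the paper's: split the cylinder into a region far from the strands of $B$, where $G_\varepsilon$ is a perturbation of $g^s$, and tubes around each strand, where after rescaling $G_\varepsilon$ is essentially the local polynomial $g^i$ of $B_i$ times a slowly varying factor $H_i$. The paper carries this out by reducing the fibration test to the critical values of $G_{t,\varepsilon}$ and then proving a ``Claim'' that the non-obvious critical points sit at $z_{i,j}(t)+\varepsilon\varphi_{i,j,q}(t,\varepsilon)$ with $\varphi_{i,j,q}(t,0)=c_{i,j,q}(t)$; this is established via a coordinate change and a limit argument on the Riemann sphere. Your gradient dichotomy is a legitimate alternative that sidesteps locating the critical points, which is the most technical step in the paper.

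There is, however, a genuine gap in your order of quantifiers. You assert that $\partial_\tau\arg H_i$ is bounded independently of $\varepsilon$ and of $r_i$, but $H_i$ contains the factors coming from the \emph{other} satellite strands $z_{i,j}-z_{i',j'}+\varepsilon\bigl(v-z^{i'}_{k,\ell}(r_{i'}\,\cdot\,)\bigr)$, and differentiating in $\tau$ (equivalently in $t$) produces terms of size $\varepsilon\,r_{i'}$ from $\partial_t z^{i'}_{k,\ell}(r_{i'}\,\cdot\,)$. The same happens for $i'=i$, $j'\neq j$. So for a fixed $\varepsilon>0$ your bound on $\partial_\tau\arg H_i$ grows with the $|r_{i'}|$, and the thresholds $q_i$ you extract from it would depend circularly on the $r_{i'}$. (The identical issue appears on $\mathcal{E}$: the $dt$-part of $d\bigl(\arg(G_\varepsilon/g^s)\bigr)$ is $O(\varepsilon\max_i|r_i|)$, not $O(\varepsilon)$.) The remedy is the one the paper uses implicitly: determine the $q_i$ from the $\varepsilon\to 0$ limits, which \emph{are} independent of all $r_{i'}$ (in the paper's notation, $\lim_{\varepsilon\to 0}T_2$ depends only on the $z_{i,j}$), and only afterwards, for each admissible tuple $(r_1,\ldots,r_m)$, choose $\varepsilon$ small enough depending on that tuple. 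This is permissible because P-fiberedness is a statement about the braid type, and any sufficiently small $\varepsilon$ represents it. A smaller point: on the annulus $\{R\varepsilon\le|u-z_{i,j}|<\eta\}$ the telescoping estimate $G_\varepsilon=g^s(1+O(\varepsilon))$ does not hold in $C^1$; the clean argument there is simply that $\partial_u\log G_\varepsilon=\tfrac{s}{u-z_{i,j}}(1+O(1/R))+O(1)\neq 0$, so the $du$-component of $d(\arg G_\varepsilon)$ already does not vanish.
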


In \cite{mikami} Hirasawa studied under which conditions satelitte knots are fibered. The satellites from Theorem \ref{thm:main} satisfies the conditions in \cite{mikami}, so it is not surprising that the result is a fibered link. The fact that it is a P-fibered braid however is a stronger statement.

Building on a construction from \cite{bode:real} we also show that every link is a sublink of a real algebraic link.
\begin{theorem}
\label{thm:sub}
Every link is a sublink of a real algebraic link.
\end{theorem}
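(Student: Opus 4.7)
The plan is to produce, for an arbitrary link $L$, a P-fibered braid $B'$ whose closure contains $L$ as a sublink; the theorem then follows from the construction in \cite{bode:real}, which realises the closure of any P-fibered braid as the link of an isolated singularity of a real polynomial map $\mathbb{R}^4\to\mathbb{R}^2$.

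First I would represent $L$ as the closure of a braid $B$ on $n$ strands (Alexander's theorem) and form the enlarged braid $\tilde{B}$ on $n+s$ strands obtained from $B$ by adjoining $s$ trivial parallel strands placed disjointly from those of $B$. Its closure is $\hat{\tilde{B}}=L\sqcup U$, where $U$ is a split $s$-component unlink, so $L$ is already a sublink of $\hat{\tilde{B}}$; the task is to replace $\tilde{B}$ by a P-fibered braid without destroying this sublink structure.

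Next I would realise the enlargement inside the satellite framework of Theorem \ref{thm:main}, rather than by a direct full twist on all strands of $\tilde{B}$, which would entangle the auxiliary strands with those of $B$ and alter the closure. I would choose a P-fibered parent braid $A$ whose closure contains an unknotted component with blackboard framing zero, and use on that component a pattern braid obtained from $\tilde{B}$ by adding enough full twists to invoke the twisting theorem from the abstract and become P-fibered, arranged so that $L$ remains a sublink of the pattern's closure. Theorem \ref{thm:main} then ensures that the resulting satellite is P-fibered for sufficiently high powers of all patterns, and because a satellite around an unknotted component with blackboard framing preserves the pattern braid's closure up to ambient isotopy in $S^3$, $L$ survives as a sublink of the satellite's closure.

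The main obstacle will be arranging the pattern braid to be P-fibered while still carrying $L$ as a sublink of its closure, which is essentially a localised version of the theorem itself. Resolving this is likely done by stabilising $B$ with sufficiently many spectator strands so that full twists applied only to those spectator strands, combined with the satellite construction, absorb the modification needed for P-fiberedness without disturbing the embedded copy of $L$. Making this trade-off precise and verifying compatibility with the hypotheses of Theorem \ref{thm:main} is the technical heart of the proof.
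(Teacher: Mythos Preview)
Your plan has two genuine gaps. First, the opening premise misstates what \cite{bode:real} provides: Theorem~\ref{thm:square} asserts only that the closure of the \emph{square} $B^2$ of a P-fibered braid is real algebraic, not the closure of $B$ itself. So producing a P-fibered $B'$ with $L\subset\widehat{B'}$ is not enough; you would need $L\subset\widehat{(B')^2}$, and closures of a braid and of its square are in general unrelated links. Second, the core step is circular, as you yourself observe (``essentially a localised version of the theorem itself''), and the proposed resolution does not work. Theorem~\ref{thm:twist} twists \emph{all} $n$ strands simultaneously via $\Delta_n^{2k}$, which links the spectator strands to those of $B$ and destroys $L$ as a sublink; there is no result in the paper allowing you to twist only a proper subset of the strands and conclude P-fiberedness. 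Likewise, Theorem~\ref{thm:main} only yields a P-fibered satellite after replacing each pattern $P_i$ by a high power $P_i^{r_i}$, which again changes the closure and need not retain $L$.

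The paper's proof bypasses Theorems~\ref{thm:main} and~\ref{thm:twist} entirely. Starting from a braid $B$ on $n$ strands closing to $L$, it adds $n$ further strands and threads one of them through $B$ in Stallings' alternating fashion, then composes with a fixed alternating block to obtain a \emph{homogeneous} braid $B''$ on $2n$ strands. Homogeneity gives P-fiberedness directly (Theorem~\ref{thm:homo}), so $\widehat{(B'')^2}$ is real algebraic by Theorem~\ref{thm:square}. The combinatorics are arranged so that forgetting either the first $n$ or the last $n$ strands of $(B'')^2$ returns exactly $B$; hence $\widehat{(B'')^2}$ contains (two copies of) $L$ as a sublink. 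Thus the squaring requirement and the sublink preservation are handled simultaneously by an explicit construction, with no circularity and no appeal to the satellite or twisting theorems.
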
 

It follows from the definition of the Garside normal form of braids that every braid $B$ can be made positive by adding a large enough number $k$ of positive full-twists $\Delta^{2k}$. Closures of positive braids are fibered and we use the concept of P-fibered braids to find a new lower bound for $k$ such that $\Delta^{2k}B$ closes to a fibered link for a given braid $B$.
\begin{theorem}
\label{thm:twist}
Let $B$ be a braid on $n$ strands, with $k_-$ negative and $k_+$ positive crossings. Then $\Delta^{2k_1}B$ and $\Delta^{-2k_2}B$ close to fibered links for all $k_1,k_2\in\mathbb{N}$ with $k_1\geq \tfrac{k_-+1}{n}$ and $k_2\geq\tfrac{k_++1}{n}$.
\end{theorem}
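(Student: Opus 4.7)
The plan is to construct, for every integer $k_1$ with $nk_1\geq k_-+1$, a P-fibered geometric representative of $\Delta^{2k_1}B$; fiberedness of the closure then follows since closures of P-fibered braids are fibered links in $S^3$. The case $\Delta^{-2k_2}B$ reduces to the positive case by taking mirrors: its mirror is $\Delta^{2k_2}\overline{B}$, where $\overline{B}$ has $k_+$ negative crossings, so applying the first part to $\overline{B}$ with $nk_2\geq k_++1$ gives a P-fibered representative whose complex-conjugate parametrization is itself P-fibered and represents $\Delta^{-2k_2}B$.

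Fix any parametrization $z_{i,j}(t)$ of $B$ with $t\in[0,2\pi]$ and put $\tilde z_{i,j}(t)\defeq \rme^{\rmi k_1 t}z_{i,j}(t)$. Multiplication by $\rme^{\rmi k_1 t}$ rotates every strand by $k_1$ full turns, so this is a parametrization of $\Delta^{2k_1}B$. Writing $g_0$ and $\tilde g$ for the two associated polynomial functions, the substitution $u\mapsto \rme^{\rmi k_1 t}u$ gives
\[
\tilde g(u,t)=\rme^{\rmi n k_1 t}\,g_0\!\left(\rme^{-\rmi k_1 t}u,\,t\right),
\]
from which one reads off that the critical values of $\tilde g_t$ are $\tilde v_j(t)=\rme^{\rmi n k_1 t}\,v_j^{(0)}(t)$, where $v_j^{(0)}(t)$ denotes the critical values of $g_{0,t}$. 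A short calculation identifies the vanishing locus of $\mathrm{d}\arg\tilde g$ off the braid with the set where $\partial_u\tilde g=0$ and $\partial_t\tilde g/\tilde g\in\mathbb{R}$; along the curves of critical values this simplifies to $\frac{\rmd}{\rmd t}\arg\tilde v_j(t)=0$. Hence P-fiberedness of $\tilde g$ is equivalent to
\[
nk_1+\frac{\rmd}{\rmd t}\arg v_j^{(0)}(t)>0\quad\text{for all }t\in S^1\text{ and all }j.
\]

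The technical core is a lemma asserting that $B$ admits a parametrization for which $\frac{\rmd}{\rmd t}\arg v_j^{(0)}(t)\geq -k_-$ uniformly in $t$ and $j$. I would produce such a parametrization by realising each crossing of $B$ as a uniform half-twist of its two strands spread over the entire interval $[0,2\pi]$. The local model shows that a negative half-twist so executed contributes rate exactly $-1$ to the argument derivative of the corresponding critical value, while leaving the critical values attached to disjoint pairs essentially undisturbed; at any time $t$ a single $v_j^{(0)}$ therefore accumulates contributions from at most the $k_-$ negative crossings of $B$, each of rate $-1$. Combined with $nk_1\geq k_-+1$ this gives $\frac{\rmd}{\rmd t}\arg\tilde v_j\geq 1>0$ pointwise, so $\tilde g$ is P-fibered.

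The main obstacle is this last lemma. When the strand pairs involved in the crossings of $B$ are disjoint, one can rotate each pair independently as a uniform half-twist and the bound is immediate; for a general braid a single strand takes part in crossings with several different neighbours, and its motion must interpolate between all of the corresponding rotation rates without causing collisions and without disturbing the other critical values. Designing smooth paths $z_{i,j}(t)$ that execute every crossing as a uniform half-twist while maintaining these constraints is the genuine geometric content of the theorem. The reward is the sharp lower bound $-k_-$ depending only on the number of negative crossings and not on the total crossing number $k_++k_-$, which is precisely what yields the factor-$n$ improvement $k_1\geq(k_-+1)/n$ over the Garside-style bound $k_1\geq k_-$ one would obtain by merely rewriting $\Delta^{2k_1}B$ as a positive braid word.
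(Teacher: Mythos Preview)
Your framework is correct and matches the paper: the identity $\tilde v_j(t)=\rme^{\rmi n k_1 t}v_j^{(0)}(t)$ for the critical values of the twisted loop, the reduction of P-fiberedness to the strict positivity of $nk_1+\tfrac{\rmd}{\rmd t}\arg v_j^{(0)}(t)$, and the mirror argument for the $\Delta^{-2k_2}$ case are all exactly as in the paper.

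The gap is precisely where you locate it, and your proposed attack on the lemma does not work as stated. For $n>2$ the critical values of $g_t$ are \emph{not} attached to pairs of strands; each $v_j^{(0)}$ is a symmetric function of all the roots, so the local two-strand model (where a uniform negative half-twist contributes rate $-1$) does not survive the passage to $n$ strands. In particular there is no reason the contributions of the $k_-$ negative crossings should simply add, and the interpolation problem you describe --- making one strand participate in several simultaneous uniform half-twists with different partners --- has no obvious solution at the level of root motions.

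The paper avoids this entirely by working in the opposite direction. It invokes Proposition~\ref{rudolph} (a result of Rudolph type): given \emph{any} parametrization of the braid $\prod_j X_{i_j}^{\epsilon_j}$ on $n$ strands (certain explicit band generators) together with the $0$-strand, there is a loop of polynomials $g_t$ whose critical values trace out exactly that parametrization and whose roots form $\prod_j\sigma_{i_j}^{\epsilon_j}=B$. So one \emph{prescribes} the critical values directly: each $X_i^{\pm1}$ is a single critical value going once around an ellipse about $0$, and by a time reparametrization one lets every positive generator run in total time $\varepsilon$ (rate large and positive) while the $k_-$ negative generators share the remaining $2\pi-\varepsilon$ (rate $>-\tfrac{2\pi}{2\pi-\varepsilon}k_->-(k_-+1)$). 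Rudolph's proposition then produces the root parametrization of $B$ for free. All the geometric difficulty you were facing is packaged inside that proposition; once it is available, the bound is a one-line estimate.
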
 

The remainder of this paper is organised as follows. In Section \ref{sec:motiv} we review applications of P-fibered braids to the construction of real algebraic links and simple branched covers, which motivate our results. In Section \ref{sec:operations} we prove Theorem \ref{thm:main}, which is followed by some examples in Section \ref{sec:example}. The proof of Theorem \ref{thm:sub} can be found in Section \ref{sec:sublink}. Theorem \ref{thm:twist} is proved in Section \ref{sec:twist}.


\textbf{Acknowledgements:} This work was supported by JSPS KAKENHI Grant Number JP18F18751 and a JSPS Postdoctoral Fellowship as JSPS International Research Fellow.

\section{Motivation: Real algebraic links and simple branched covers}
\label{sec:motiv}
This chapter reviews applications of P-fibered braids. We omit the proofs and refer to the cited articles, even though the term `P-fibered braid' is not used there.
\subsection{Real algebraic links}
In the last chapter of his seminal work \cite{milnor}, Milnor investigates properties of the so-called \textit{real algebraic links}. We use this term for the real analogue of links of singularities of complex hypersurfaces, namely links of isolated critical points of polynomials $f:\mathbb{R}^4\to\mathbb{R}^2$. This should not be confused with knotted algebraic varieties in $\mathbb{RP}^3$ as they were introduced by Viro \cite{viro}, which are also referred to as real algebraic links.

\begin{definition}
\label{def:iso}
Let $p:\mathbb{R}^4\to\mathbb{R}^2$ be a polynomial map. We say $p$ has an \textbf{isolated singularity} at the origin if $p(0)=0$, the Jacobian matrix satisfies $\nabla p(0)=0$ and there is a neighbourhood $B$ of $0\in\mathbb{R}^4$ such that $0$ is the only point in $B$ where the rank of $\nabla p$ is not full.
\end{definition}

The number $0$ in Definition \ref{def:iso} refers to the origin in $\mathbb{R}^4$ and $\mathbb{R}^2$ and the zero matrix of size 2-by-4, respectively. Like in the complex case, an isolated singularity at the origin guarantees that the intersection of the vanishing set
\begin{equation}
V_p\defeq p^{-1}(0)=\{(x_1,x_2,x_3,x_4)\in\mathbb{R}^4:p(x_1,x_2,x_3,x_4)=(0,0)\}
\end{equation}
and the 3-sphere $S^3_{\rho}$ of small enough radius $\rho>0$ is a link, whose link type is independent of the radius.

\begin{definition}
\label{def:ral}
A link $L$ is \textbf{real algebraic} if there exists a polynomial $p:\mathbb{R}^4\to\mathbb{R}^2$ such that $p$ has an isolated singularity at the origin and $p^{-1}(0)\cap S_{\rho}^3$ is isotopic to $L$ for all small enough radii $\rho$. We also say $L$ is the \textbf{link of the singularity} of $p$.
\end{definition}

Milnor showed that all real algebraic links are fibered \cite{milnor}, but it is not known which links are real algebraic. 
\begin{conjecture}[Benedetti-Shiota \cite{benedetti}]
\label{conj:bene}
A link is real algebraic if and only if it is fibered.
\end{conjecture}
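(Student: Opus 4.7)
The plan is to separate the two directions. The forward implication --- every real algebraic link is fibered --- is Milnor's classical theorem, stated in the introduction, and I take it for granted; all the substance lies in the reverse implication. Given a fibered link $L\subset S^3$, I need to exhibit a polynomial map $p:\mathbb{R}^4\to\mathbb{R}^2$ with an isolated singularity at the origin whose link of singularity, in the sense of Definition \ref{def:ral}, is isotopic to $L$.

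My strategy for the reverse direction routes entirely through the P-fibered braid machinery of this paper, in two stages. First, I would realise $L$ as the closure of a P-fibered braid $B$, producing an associated loop $g_t$, $t\in S^1$, of monic complex polynomials such that $\arg g$ gives a genuine fibration of $(\mathbb{C}\times S^1)\setminus B$ over $S^1$. Second, assuming this is done, I would invoke the construction from \cite{bode:real, adicact}: combining $g$ with the asymptotic behaviour controlled by Eq.~(\ref{eq:limit}) and a suitable semiholomorphic radial extension, one produces a polynomial $p:\mathbb{R}^4\to\mathbb{R}^2$ whose vanishing set meets a small sphere in the closure of $B$, namely $L$. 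Isolatedness of the singularity follows from the globality of the fibration condition built into the definition of a P-fibered braid, so once the first stage is done the second is essentially mechanical.

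The first stage is the principal obstacle, and indeed the excerpt itself flags it as open: it is not known whether every fibered link is the closure of a P-fibered braid. Theorem \ref{thm:twist} gives partial leverage --- any braid becomes P-fibered after sufficiently many positive or negative full twists $\Delta^{\pm 2k}$ --- but adding twists changes the closure, so this only supplies a new family of fibered links rather than hitting a prescribed one. A plausible line of attack would be to start from an open book decomposition of $S^3$ with binding $L$ and to realise its monodromy explicitly as a loop in the configuration space of monic polynomials with distinct roots; one could then try to combine Theorem \ref{thm:main} (twisted satellite operations preserve P-fiberedness) with a factorisation of the monodromy into elementary mapping classes, each of which one would need to realise by a concrete P-fibered building block. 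The deep difficulty is that P-fiberedness is an analytic condition on a loop of polynomials, whereas being a fibered link is a purely topological condition, and no general dictionary between them is currently known; I would not expect a short proof.
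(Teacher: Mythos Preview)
The statement you are attempting to prove is labelled as a \emph{conjecture} in the paper (Conjecture~\ref{conj:bene}, due to Benedetti--Shiota), and the paper does not prove it; there is no proof in the paper to compare against. You have in fact recognised this yourself: the paper explicitly says that it is not known which links are real algebraic, and flags as open whether every fibered link is the closure of a P-fibered braid. Your proposal is therefore not a proof but an outline of a programme together with an honest identification of the principal obstruction, and your final paragraph concedes as much.

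Beyond the gap you already name, your second stage overstates what the cited constructions deliver. Theorem~\ref{thm:square} (from \cite{bode:real}) asserts only that if $B$ is P-fibered then the closure of $B^{2}$ is real algebraic --- not the closure of $B$ itself. Thus even if stage one were granted and $L$ were realised as the closure of a P-fibered braid $B$, the construction in \cite{bode:real, adicact} would yield real algebraicity for the closure of $B^{2}$, which in general is not isotopic to $L$. For your route to succeed you would need, in addition, either a direct construction producing an isolated singularity with link the closure of $B$ (rather than $B^{2}$), or a separate argument that every fibered link arises as the closure of the \emph{square} of some P-fibered braid. Neither is provided by the paper, and neither is known.
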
 
So far, the set of links that are known to be real algebraic is still comparatively small. There are of course the algebraic links (coming from complex polynomials), but also certain unions of algebraic links \cite{pichon} and the connected sum $K\# K$ of any fibered knot $K$ with itself \cite{looijenga}. Perron \cite{perron} and Rudolph \cite{rudolph:isolated} constructed polynomials for the figure-eight knot and it was shown in \cite{bode:real} that all closures of squares of homogeneous braids are real algebraic (cf. Definition \ref{def:homo}).

The concept of P-fibered braids had not been introduced at the time of \cite{bode:real}, but now we can rephrase the results as follows.
\begin{definition}
\label{def:homo}
A braid $B$ on $n$ strands is called \textit{homogeneous} if it can be written as a word $w$ in the Artin generators such that for every $i=1,2,\ldots,n-1$ the generator $\sigma_i$ appears in $w$ if and only if $\sigma_i^{-1}$ does not appear.
\end{definition}
\begin{theorem}\label{thm:homo}\cite{bode:real}
Let $B$ be a homogeneous braid. Then $B$ is P-fibered.
\end{theorem}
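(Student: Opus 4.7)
The plan is to reformulate the P-fibered condition as a monotonicity statement for critical values of the family $g_t$, and then to construct an explicit loop of polynomials realising a homogeneous braid. Since $g$ is holomorphic in $u$ for each fixed $t$, writing $u = x+iy$ a direct calculation gives
\begin{equation*}
d(\arg g) \;=\; \operatorname{Im}\!\left(\tfrac{\partial_u g}{g}\right) dx + \operatorname{Re}\!\left(\tfrac{\partial_u g}{g}\right) dy + \operatorname{Im}\!\left(\tfrac{\partial_t g}{g}\right) dt.
\end{equation*}
Away from $B$, $d(\arg g)$ vanishes iff $\partial_u g(u,t)=0$ and $\partial_t g(u,t)/g(u,t) \in \mathbb{R}$. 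Parametrising smooth branches $u_k(t)$ of the $n-1$ critical points of $g_t$ and setting $v_k(t) = g(u_k(t),t)$, the chain rule together with $\partial_u g(u_k,t)=0$ gives $v_k'(t) = \partial_t g(u_k(t),t)$. So $B$ is P-fibered precisely when there is a parametrisation for which every critical value $v_k$ has strictly monotonic argument at every time where $v_k(t) \neq 0$.

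For a homogeneous braid word $w = \sigma_{i_1}^{\epsilon_1}\cdots\sigma_{i_N}^{\epsilon_N}$, let $\epsilon(i) \in \{\pm 1\}$ denote the common sign of every occurrence of $\sigma_i$ in $w$. Starting from an initial configuration on the real axis, parametrise each letter $\sigma_i^{\epsilon(i)}$ by rotating the strands at positions $i$ and $i+1$ about their midpoint $m$ by angle $\pi\epsilon(i)$. Factorising $g_t(u) = ((u-m)^2 - r^2 e^{2i\epsilon(i)\theta(t)})\,h_t(u)$, the local critical point near $m$ has critical value approximately $-r^2 e^{2i\epsilon(i)\theta}\,h_t(m)$, whose argument winds by $2\pi\epsilon(i)$ monotonically as $\theta$ runs from $0$ to $\pi$. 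A perturbation argument handles the remaining $n-2$ critical points, which drift only slightly from their starting positions during a single letter.

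The main obstacle is to ensure that these $n-2$ critical values not involved in a given crossing also have strictly monotonic argument, since otherwise they risk being stationary during that letter of the braid word. I would address this by superposing on every elementary motion a small global background drift: between and during crossings, every pair of adjacent strands at a position $j$ slowly rotates in direction $\epsilon(j)$, so that each critical value is pushed in a direction consistent with the sign attached to its position. Homogeneity is essential here: if some $\sigma_j$ occurred in $w$ with both signs there would be no consistent choice of $\epsilon(j)$, and the corresponding critical value would have to reverse its direction of rotation at some moment, producing $\operatorname{Im}(\partial_t g/g)=0$ and a critical point of $\arg g$. Under homogeneity all background drifts and local crossings are compatible. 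The quantitative part of the argument is to choose the rotation speeds and the drift amplitude so that the drift is large enough to keep every $\arg v_k$ moving, yet small enough that no extra collisions of critical points occur and that the local winding contribution of each letter dominates its sign; this is the step I expect to require the most care.

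Once this monotonicity holds at every critical point of $\arg g$, the map $\arg g: (\mathbb{C}\times S^1)\setminus B \to S^1$ is a submersion, and the asymptotic behaviour in (\ref{eq:limit}) guarantees the properness needed to upgrade this to a fibration, so $B$ is P-fibered.
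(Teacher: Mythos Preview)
The paper does not supply a proof of this theorem; it is quoted from \cite{bode:real}. However, the method behind that proof is visible in Section~\ref{sec:twist} (Proposition~\ref{rudolph} and the proof of Theorem~\ref{thm:twist}), and it is essentially the \emph{reverse} of your approach: rather than prescribing the motion of the roots and then trying to control the resulting critical values, one prescribes the motion of the critical values and lifts to a loop of polynomials. Concretely, one starts from a base polynomial whose critical values $v_1,\dots,v_{n-1}$ lie at distinct moduli, so that moving $v_i$ once around the origin while keeping the others fixed produces exactly the generator $\sigma_i$ in the roots. For a homogeneous word $\prod_j \sigma_{i_j}^{\epsilon_{i_j}}$ one then lets each $v_i$ circle the origin in direction $\epsilon(i)$ the appropriate number of times; because $\epsilon(i)$ is well-defined by homogeneity, each $v_i$ can be kept winding monotonically throughout, and $\tfrac{\partial}{\partial t}\arg v_i(t)\neq 0$ follows immediately.

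Your reformulation of the P-fibered condition in terms of monotone argument of the critical values is correct and matches the paper's Section~\ref{sec:operations}. The gap is in your construction. The sentence ``every pair of adjacent strands at a position $j$ slowly rotates in direction $\epsilon(j)$'' is not well-posed: strand $j$ belongs to both the pair $(j-1,j)$ and the pair $(j,j+1)$, and if $\epsilon(j-1)\neq \epsilon(j)$ (which homogeneity certainly allows) you are asking it to rotate about two different centres in two different senses simultaneously. Even if you replace this by some well-defined combined motion of the roots, you then face the problem you yourself flag: during a crossing $\sigma_i^{\epsilon(i)}$, the $n-2$ other critical values drift in directions dictated by the global root configuration, not by $\epsilon(j)$, and you have given no mechanism to show that a small superposed drift can dominate these contributions for \emph{every} $j$ at \emph{every} time without spoiling the main crossing. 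This is not merely ``the step requiring the most care''; as written the construction is underspecified. The critical-values-first route sidesteps the entire difficulty, since there the monotonicity of $\arg v_i$ is imposed by fiat rather than derived from an analysis of the roots-to-critical-values map.
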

\begin{theorem}\label{thm:square}\cite{bode:real}
Let $B$ be a P-fibered braid. Then the closure of $B^2$ is real algebraic.
\end{theorem}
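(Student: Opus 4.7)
The plan is to construct, from the P-fibered loop $g_{t}$ witnessing $B$, an explicit polynomial map $f:\mathbb{R}^{4}\to\mathbb{R}^{2}$ with an isolated singularity at the origin whose link of singularity (in the sense of Definition \ref{def:ral}) is the closure of $B^{2}$. Identifying $\mathbb{R}^{4}$ with $\mathbb{C}^{2}$ via coordinates $(u,v)$, one transplants the loop onto the slice $|v|=1$ and then uses a scaling argument to realise this link on small spheres about the origin.

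I would first replace $g_{t}$ by a trigonometric polynomial. Since $g_{t}$ is monic in $u$ of fixed degree with smoothly varying coefficients, its Fourier expansion $g(u,t)=\sum_{k}a_{k}(u)\rme^{\rmi kt}$ has each $a_{k}$ a polynomial in $u$, and P-fiberedness being a $C^{1}$-open condition allows one to truncate at large $N$ without losing the fibration: $\tilde g(u,t)=\sum_{|k|\le N}a_{k}(u)\rme^{\rmi kt}$ still represents $B$ as a P-fibered braid. The braid $B^{2}$ is then witnessed by $\tilde g(u,2t)=\sum_{|k|\le N}a_{k}(u)\rme^{2\rmi kt}$; the exponents $2\rmi kt$ involve only \emph{even} powers of $v=\rme^{\rmi t}$ and $\bar v$, and it is this evenness that makes the construction go through for $B^{2}$ but not directly for $B$.

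Next I would define
\[
f(u,v,\bar v)\defeq\sum_{k\ge0}a_{k}(u)v^{2k}+\sum_{k<0}a_{k}(u)\bar v^{-2k},
\]
a polynomial in $u,v,\bar v$, hence a polynomial map $\mathbb{C}^{2}\to\mathbb{C}$. By construction $f|_{|v|=1}=\tilde g(u,2t)$, so the vanishing set $V_{f}$ meets the torus $|v|=1$ exactly in the geometric braid $B^{2}$, and P-fiberedness together with continuity of the gradient yields that $\arg f$ is a submersion on the complement of $V_{f}$ in a tubular neighbourhood of that torus.

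The principal obstacle is that the braid sits on the torus $|v|=1$, far from the origin, so the link of the singularity of $f$ at $0$ is not visible on small spheres. To bring the picture into Milnor's setup I would rescale the Fourier coefficients, replacing each $a_{k}(u)$ by $\mu^{|k|}a_{k}(\mu u)$ for small $\mu>0$ (still P-fibered by openness), which pulls the braid toward the origin. The evenness of the exponents now becomes crucial: it permits a weighted-scaling analysis $(u,v,\bar v)\mapsto(\lambda u,\lambda^{1/2}v,\lambda^{1/2}\bar v)$ under which the leading-order part of the rescaled $f_{\mu}$ is quasi-homogeneous, while odd powers of $v$ (which would occur for $B$ itself) would break this symmetry. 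This quasi-homogeneity then lets one show that for $\mu$ small enough the origin is an isolated critical point of $f_{\mu}$ and that $V_{f_{\mu}}\cap S^{3}_{\rho}$ is ambient-isotopic to the closure of $B^{2}$ for small $\rho$. Verifying the absence of spurious critical points in a punctured neighbourhood of $0$ is the delicate technical step on which the whole proof hinges.
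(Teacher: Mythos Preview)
The paper does not prove this theorem; it is quoted from \cite{bode:real}, and the only information the paper supplies is that the polynomial $p$ is written down \emph{explicitly} as a semiholomorphic mixed polynomial in $u,v,\bar v$ of $u$-degree $n$. So you are not being compared against a proof in this paper, but against the construction of \cite{bode:real} as summarised here.

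Your first two moves---Fourier-truncating $g_t$ to a trigonometric polynomial and observing that passing to $B^2$ replaces $\rme^{\rmi kt}$ by $\rme^{2\rmi kt}$, hence by even powers of $v$ and $\bar v$---match the construction in \cite{bode:real} exactly, and your resulting $f(u,v,\bar v)$ is the correct starting object. Where you diverge is in the step that brings the link to the origin. The construction in \cite{bode:real} does \emph{not} proceed by a limiting rescaling argument with a quasi-homogeneous leading part; instead it multiplies each monomial $c_{j,k}\,u^{j}v^{2k}$ (respectively $c_{j,k}\,u^{j}\bar v^{-2k}$) by the power $(v\bar v)^{m_{j,k}}$ needed to make \emph{every} term have the same weighted degree with respect to weights $(a,1,1)$ on $(u,v,\bar v)$. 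On $|v|=1$ this changes nothing, so the braid is still $B^2$; but now $f$ is exactly radially weighted homogeneous, the zero set is a cone, and the P-fibered condition transported from $|v|=1$ gives the isolated singularity at once. The role of $B^2$ is precisely that the required exponents $m_{j,k}=\tfrac{1}{2}\bigl(a(n-j)-2|k|\bigr)$ are integers once $a$ is chosen even and large, whereas for $B$ itself odd $|k|$ would force half-integers.

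Your alternative---rescale $a_k(u)\mapsto\mu^{|k|}a_k(\mu u)$ and analyse the leading part under $(u,v,\bar v)\mapsto(\lambda u,\lambda^{1/2}v,\lambda^{1/2}\bar v)$---does not obviously work. Under that scaling the term $u^{j}v^{2k}$ has weight $j+k$, so the genuine leading part is $u^n$ alone, whose singular locus is the whole $v$-axis; the P-fibered data you need has been pushed into lower-order terms. Your rescaling of the $a_k$ does not repair this, and indeed the map $\sum_k\mu^{|k|}a_k(\mu u)\rme^{\rmi kt}$ is not a small perturbation of $\tilde g$ for small $\mu$ (the $j+|k|<n$ terms blow up relative to the top term), so ``still P-fibered by openness'' is not justified as written. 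The ``delicate technical step'' you flag is in fact avoided entirely by the $(v\bar v)^{m}$ homogenisation trick, which is the missing idea in your proposal.
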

A more recent and more detailed account of these results can also be found in \cite{bode:review}. The fact that homogeneous braid closures are fibered is due to Stallings \cite{stallings}.

A construction that was shown to go beyond the family of homogeneous braids has been proposed in \cite{adicact}. Both constructions in \cite{bode:real} and \cite{adicact} allow us to write down the real polynomial map $p:\mathbb{R}^4\to\mathbb{R}^2$ explicitly in terms of $g_t$. In particular, $p$ can be written as a \textit{mixed polynomial} in complex variables $u$, $v$ and the conjugate $\overline{v}$, i.e., it is holomorphic with respect to one of the complex variables, a property that we call \textit{semiholomorphic}. The degree of $p$ with respect to $u$ is equal to the degree of $g_t$ and hence the number of strands of the P-fibered braid.

\subsection{Simple branched covers and symmetric Hopf plumbings}
In this section we will explain how closures of P-fibered braids are also examples of fibered links that can be constructed from the unknot via a sequence of Hopf plumbings and deplumbings that are in some sense symmetric with respect to certain simple branched covers.

Harer conjectured that every fibered link could be obtained from the unknot by a sequence of Hopf plumbings and deplumbings. This was eventually proven by Giroux \cite{giroux1, giroux} by establishing the correspondence between contact structures and open book decompositions.

Before Giroux's proof, Montesinos and Morton \cite{morton} attempted to prove Harer's conjecture with the following approach.

Given a link $L$ in $S^3$ we say an unknot $\alpha\subset S^3\backslash L$ is a \textit{braid axis} of $L$ if the fibration of the unknot complement $S^3\backslash \alpha$ can be arranged such that each fiber intersects $L$ transversally in the same number of points.

\begin{definition}
A $d$-sheeted branched covering map $\pi:F\to S$ between two surfaces $F$ and $S$ is called simple if for every point $p$ in the finite branch set $Q\subset S$ the preimage $\pi^{-1}(p)$ consists of $d-1$ points. 

A map $\pi:M\to N$ between closed 3-manifolds $M$ and $N$ is a \textbf{simple branched cover} with branch set $C\subset N$ if it is locally homeomorphic to the product of an interval with a simple $d$-sheeted branched cover of a disc, and the branch points in the products form the set $C$. 

\end{definition}

Let $\pi:S^3\to S^3$ be a simple branched cover, branched over a link $L$. Then for every braid axis $\alpha$ of $L$ the link $\pi^{-1}(\alpha)$ is fibered. Hilden and Montesinos showed that the converse is also true \cite{hilden}. For every fibered link $L'$ there exists such a simple branched cover $\pi$, branched over some link $L$ and a braid axis $\alpha$ of $L$, such that $L'=\pi^{-1}(\alpha)$.

Montesinos and Morton observed that for a fixed simple branched cover $\pi:S^3\to S^3$ with branch set $L$ the preimages $\pi^{-1}(\alpha)$ and $\pi^{-1}(\beta)$ of two braid axes $\alpha$ and $\beta$ of $L$ are related by a sequence of Hopf plumbings and deplumbings \cite{morton}, which led to the following question:

\begin{question}
\label{q:morton}
Is there for every fibered link $L'$ a simple branched cover $\pi:S^3\to S^3$, branched over a link $L$ with braid axes $\alpha$ and $\beta$, such that $\pi^{-1}(\alpha)=L'$ and $\pi^{-1}(\beta)$ is an unknot?
\end{question}

It is (to our knowledge) not known if every Hopf plumbing can be described in this way (i.e., a change of the braid axis for the branch link $L$). Hence, while a positive answer to this question implies another proof of Harer's conjecture, Giroux's work does not imply an answer to this question.

In order for a Hopf (de)plumbing operation on a fiber surface $F$ to arise in this way, the attaching arc $\gamma\subset F$ has to be somewhat symmetric with respect to the simple branched covering map $\pi$. Namely, there has to be a path $\gamma'$ in the disk $S$ from $L\cap S$ to $\partial S$, disjoint from $L$ apart from its starting point, such that $\pi(\gamma)=\gamma'$. If a Hopf plumbing or deplumbing occurs along an arc $\gamma_i$, $i=1,2,\ldots,N$, like this, we say the Hopf (de)plumbing is \textit{symmetric}. 

Let $D$ be the open unit disk in $\mathbb{C}$ with closure $\overline{D}$. Note that $(\overline{D}\times S^1)/((\rme^{\rmi \chi},\rme^{\rmi t_1})\sim(\rme^{\rmi \chi},\rme^{\rmi t_2}))\cong S^3$. Let $\phi: \mathbb{C}\to D$ be an orientation-preserving diffeomorphism such as $\phi(u)=\frac{u}{1+|u|}$. Given a P-fibered braid $B$ on $n$ strands and the corresponding function $g$, we can define the map $\pi:S^3\to S^3$,
\begin{align}
\pi(\phi(u),\rme^{\rmi t})&=(\phi^{-1}(g(u,t)),\rme^{\rmi t}),\nonumber\\
\pi(\rme^{\rmi \chi},\rme^{\rmi t})&=(\rme^{\rmi \chi n},\rme^{\rmi t}).
\end{align}

Using Eq. (\ref{eq:limit}) and basic properties of complex polynomials it is not difficult to check that $\pi$ is a simple branched cover, branched over the link that is the closure of the critical values $(v_1(t),v_2(t),\ldots,v_{n-1}(t))$ of $g_t$. One braid axis of this link is given by $\beta=(\rme^{\rmi s},\rme^{\rmi t})$, with $s$ varying from 0 to $2\pi$, whose preimage $\pi^{-1}(\beta)$ is the unknot $(\rme^{\rmi \chi},\rme^{\rmi t})$, with $\chi$ varying from 0 to $2\pi$. Another braid axis is $\alpha=(0,\rme^{\rmi t})$, with $t$ varying from 0 to $2\pi$, whose preimage $\pi^{-1}(\alpha)$ is the closure of $B$.

\begin{proposition}(cf. \cite{survey})
\label{prop:morton}
Let $B$ be a P-fibered braid. Then $\pi:S^3\to S^3$ as above is a simple branched cover, branched over a link $L$ with braid axes $\alpha$ and $\beta$, such that $\pi^{-1}(\alpha)$ is the closure of $B$ and $\pi^{-1}(\beta)$ is an unknot. 
\end{proposition}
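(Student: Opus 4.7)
The plan is to verify in sequence that $\pi$ is a well-defined continuous map $S^3\to S^3$, that $\pi$ is a simple branched cover with branch set the claimed link $L$, and finally that $\alpha$ and $\beta$ are braid axes of $L$ with the stated preimages.

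First I would check continuity along the boundary $\partial D\times S^1/\!\sim$, where the two clauses in the definition must agree. For $u$ with $|u|\to\infty$ and $\arg u=\chi$, Eq.~(\ref{eq:limit}) gives $\arg g_t(u)\to n\chi$ and $|g_t(u)|\to\infty$, so $\phi(g_t(u))\to \rme^{\rmi n\chi}$, matching $\pi(\rme^{\rmi\chi},\rme^{\rmi t})=(\rme^{\rmi n\chi},\rme^{\rmi t})$; this image is moreover independent of $t$ on the boundary, which is consistent with the equivalence relation defining $S^3$.

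Next I would establish the branched-cover structure fibre by fibre. For each fixed $t$, the polynomial $g_t:\mathbb{C}\to\mathbb{C}$ is a monic degree-$n$ branched cover of the plane over itself, with branch set the $n-1$ critical values $v_1(t),\ldots,v_{n-1}(t)$; away from critical points it is a local biholomorphism and at a non-degenerate critical point it is modelled by $w\mapsto w^2$. Assembling these fibrewise covers over $t\in S^1$ and extending via the degree-$n$ map on $\partial D/\!\sim$ (which lies outside the branch set because the critical values of $g_t$ are finite) exhibits $\pi$ as a degree-$n$ branched cover of $S^3$. Since $g_0=g_{2\pi}$ the curves $(v_i(t),\rme^{\rmi t})$ close up into a link $L\subset D\times S^1/\!\sim\,\subset S^3$. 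To obtain \emph{simple} branching one needs each $v_i(t)$ to be the image of exactly one non-degenerate critical point; because P-fiberedness is an open condition on smooth loops in the space of monic polynomials with distinct roots, we may perturb $g$ slightly within the P-fibered class to achieve this generic transversality. This is the delicate step, and the one where I expect the main obstacle: one must confirm that the loci where two critical values collide, or where a critical point degenerates, are avoidable by a small deformation that preserves both the braid type of $B$ and the fibration condition on $\arg g$.

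Finally I would identify the axes and compute the preimages. The unknots $\alpha=\{0\}\times S^1$ and $\beta=\partial D/\!\sim$ are the core circles of the two solid tori in the standard genus-one Heegaard splitting of $S^3$, forming a Hopf link, and $L$ lies in the interior region $D\times S^1/\!\sim$ separating them. The disk fibration of $S^3\setminus\alpha$ by fibres $\overline{D}\times\{\rme^{\rmi t}\}/\!\sim$ meets $L$ transversally in the $n-1$ critical values at height $t$, and the complementary meridional fibration of $S^3\setminus\beta$ meets $L$ transversally as well, so both unknots are braid axes of $L$. For the preimages, $(u,\rme^{\rmi t})\in\pi^{-1}(\alpha)$ iff $g_t(u)=0$, and by Eq.~(\ref{eq:defg}) these are precisely the $n$ points of the braid $B$ at height $t$; hence $\pi^{-1}(\alpha)$ is the closure of $B$. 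The preimage $\pi^{-1}(\beta)$ is forced into the boundary circle (since $\phi(g_t(u))\in\partial D$ requires $|g_t(u)|=\infty$, hence $u\in\partial D$), on which $\pi$ acts by $\rme^{\rmi\chi}\mapsto\rme^{\rmi n\chi}$, a connected $n$-fold cover, so $\pi^{-1}(\beta)$ is a single unknot.
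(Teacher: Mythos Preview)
Your outline is largely sound and tracks the paper's own brief discussion preceding the proposition (the paper gives no formal proof, only that sketch and a reference to \cite{survey}). However, there is a genuine gap in your final paragraph: you assert without justification that ``the complementary meridional fibration of $S^3\setminus\beta$ meets $L$ transversally as well,'' and this is precisely the step where the P-fibered hypothesis does its real work.

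The pages of the fibration of $S^3\setminus\beta$ are the half-planes $\{u:\arg u=\chi_0\}\times S^1$, compactified to disks by the boundary collapse. The branch curve $t\mapsto(\phi(v_p(t)),\rme^{\rmi t})$ is transverse to these pages if and only if $\tfrac{\partial}{\partial t}\arg v_p(t)\neq 0$ for every $p$ and every $t$; as the paper spells out at the start of Section~\ref{sec:operations}, this is exactly the reformulation of the condition that $\arg g$ be a fibration, i.e.\ that $B$ be P-fibered. Without that hypothesis the critical-value link $L$ need not be braided about $\beta$ at all. By contrast, $\alpha$ is automatically a braid axis of $L$ for any loop of monic polynomials with distinct roots, P-fibered or not, since $L$ is already parametrised by $t$. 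You invoke P-fiberedness only for the openness used in the simplicity step; that use is legitimate (and the paper makes the same perturbation of the critical points in Section~\ref{sec:operations}), but it is incidental rather than the crux of the proposition.
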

\begin{corollary}
Let $B$ be a P-fibered braid. Then its closure can be obtained from the unknot by a sequence of symmetric Hopf plumbings and deplumbings.
\end{corollary}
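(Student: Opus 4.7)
The plan is to combine Proposition \ref{prop:morton} with the Montesinos--Morton result cited in the preceding discussion, and then to verify that the resulting plumbings and deplumbings automatically satisfy the symmetry condition. First I would invoke Proposition \ref{prop:morton} to obtain, from the P-fibered braid $B$, a simple branched cover $\pi:S^3\to S^3$ branched over a link $L$ together with two braid axes $\alpha,\beta$ of $L$ whose preimages are, respectively, the closure $\hat B$ and the unknot $U$.

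Next I would apply the Montesinos--Morton theorem to this configuration: since $\alpha$ and $\beta$ are both braid axes for the same branch link $L$ of a fixed simple branched cover $\pi$, the fibered links $\pi^{-1}(\alpha)=\hat B$ and $\pi^{-1}(\beta)=U$ are related by a finite sequence of Hopf plumbings and deplumbings on their fiber surfaces. Reversing this sequence yields a way to reach $\hat B$ from the unknot $U$ by Hopf (de)plumbings, so the only remaining issue is the adjective \emph{symmetric}.

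To verify symmetry, I would revisit the Montesinos--Morton construction itself. The passage from the fiber of $\pi^{-1}(\alpha)$ to the fiber of $\pi^{-1}(\beta)$ is carried out by isotoping $\beta$ to $\alpha$ through a sequence of elementary moves on the pair $(L,\text{axis})$ in the base $S^3$; each elementary move corresponds to attaching or removing a band in the base along an arc $\gamma'$ running from a point of $L$ to the current axis, and the plumbing arc $\gamma$ in the cover is precisely a component of $\pi^{-1}(\gamma')$. This is exactly the definition of a symmetric Hopf (de)plumbing given in the excerpt, so each step of the Montesinos--Morton sequence is symmetric by construction.

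The main obstacle I expect is purely bookkeeping: one has to make sure that the intermediate braid axes produced by the Montesinos--Morton procedure remain braid axes for the \emph{same} branch link $L$ under the \emph{same} cover $\pi$, and that the arc $\gamma'$ in the base meets $L$ only at its initial endpoint (as required by the definition of symmetric plumbing). Once the construction is set up so that each intermediate move is realised by a band in the base disk disjoint from $L$ except at its starting endpoint, the preimage arc $\gamma$ in the fiber surface is forced to be a symmetric attaching arc, and the corollary follows.
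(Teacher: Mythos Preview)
Your proposal is correct and follows exactly the route the paper intends. The paper does not write out a proof of this corollary at all; it is meant to be an immediate consequence of Proposition~\ref{prop:morton} together with the Montesinos--Morton observation quoted just before it, and the very definition of ``symmetric'' Hopf (de)plumbing given in the same paragraph---precisely the three ingredients you assemble. Your additional remarks about bookkeeping (intermediate axes staying braid axes for the same $L$, the arc $\gamma'$ meeting $L$ only at its endpoint) are the right things to check, but the paper regards these as already contained in \cite{morton}.
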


\begin{corollary}
Let $B$ be a homogeneous braid. Then there is a simple branched cover as in Proposition \ref{prop:morton}. Hence the closure of $B$ can be obtained from the unknot by a sequence of symmetric Hopf plumbings and deplumbings.
\end{corollary}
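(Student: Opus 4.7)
The statement to prove is essentially a direct combination of results already listed in this section, so the plan is simply to chain them together. The hypothesis is that $B$ is a homogeneous braid; I want to produce a simple branched cover $\pi:S^3\to S^3$ as in Proposition \ref{prop:morton} and then invoke the previous corollary to obtain the description via symmetric Hopf plumbings and deplumbings.

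First, I would apply Theorem \ref{thm:homo} (the Stallings/Bode result that every homogeneous braid is P-fibered). This upgrades the hypothesis from ``homogeneous'' to ``P-fibered,'' which is exactly what is needed to enter the framework of Proposition \ref{prop:morton}. In particular, the loop of polynomials $g_t$ associated to $B$ gives a well-defined map $g:\mathbb{C}\times S^1\to\mathbb{C}$ with $\arg g$ a fibration, so the construction of $\pi$ through $(\phi(u),\rme^{\rmi t})\mapsto(\phi^{-1}(g(u,t)),\rme^{\rmi t})$ on the solid torus and $(\rme^{\rmi\chi},\rme^{\rmi t})\mapsto(\rme^{\rmi\chi n},\rme^{\rmi t})$ on its boundary is available.

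Next, I would invoke Proposition \ref{prop:morton} directly: it provides the simple branched cover $\pi$, identifies the branch link $L$ as the closure of the critical value curves of $g_t$, exhibits the two braid axes $\alpha$ and $\beta$, and shows that $\pi^{-1}(\beta)$ is an unknot while $\pi^{-1}(\alpha)$ is the closure of $B$. This is exactly the first conclusion of the statement.

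Finally, I would apply the preceding corollary, which asserts that the closure of any P-fibered braid can be obtained from the unknot by a sequence of symmetric Hopf plumbings and deplumbings. Since $B$ has already been shown to be P-fibered, this yields the second conclusion. There is no real obstacle in this proof; the entire content lies in Theorem \ref{thm:homo} and Proposition \ref{prop:morton}, and the corollary just records their composition for the special case of homogeneous braids.
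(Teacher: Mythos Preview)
Your proposal is correct and matches the paper's approach: the corollary is stated without proof because it is immediate from Theorem \ref{thm:homo} (homogeneous braids are P-fibered) together with Proposition \ref{prop:morton} and the preceding corollary, exactly as you outline.
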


The constructions in \cite{bode:real} and \cite{adicact} as well as Theorem \ref{thm:main} can thus be motivated by making progress with regards to Conjecture \ref{conj:bene} and Question \ref{q:morton}. P-fibered braids (together with their braid axes) are also examples of \textit{generalized exchangeable braids} as in \cite{rampi} and \textit{mutually braided open books} as in \cite{mutual}.

\section{The proof of Theorem \ref{thm:main}}
\label{sec:operations}
\subsection{The proof}
It is a simple calculation that $\arg g$ is a fibration if and only if for all $(u_*,t_*)\in(\mathbb{C}\times S^1)\backslash B$:
\begin{equation}
\label{eq:fib}
\left(\frac{\partial g}{\partial u}(u_*,t_*),\frac{\partial \arg g}{\partial t}(u_*,t_*)\right)\neq (0,0).
\end{equation}

If we denote the critical points of $g_t$, i.e., the roots of the derivative $\tfrac{\partial g_t}{\partial u}$, by $c_p(t)$, $p=1,2,\ldots,n-1$, and set the critical values to be $v_p(t)\defeq g_t(c_p(t))$, $p=1,2,\ldots,n-1$, then Eq. (\ref{eq:fib}) becomes
\begin{equation}
\frac{\partial \arg v_p(t)}{\partial t}\neq 0,\qquad \text{for all }p=1,2,\ldots,n-1,\quad t\in[0,2\pi].
\end{equation}
Since the condition in Eq. (eq:fib) is an open condition, i.e., it remains true under small perturbations of the coefficients of $g_t$, we can assume without loss of generality that the critical points $c_p(t)$, $p=1,2,\ldots,n-1$, are distinct for any $t\in[0,2\pi]$.

We denote the critical points of $G_{t,\varepsilon}$ in Eq. (\ref{eq:satbraid}) by $C_{\varepsilon,p}(t)$, $p=1,2,\ldots,ns-1$, and its critical values by $V_{\varepsilon,p}(t)\defeq G_{t,\varepsilon}(C_{\varepsilon,p}(t))$. In order to prove Theorem \ref{thm:main}, we have to show that 
\begin{equation}
\frac{\partial \arg V_{\varepsilon,p}(t)}{\partial t}\neq 0,\qquad \text{for all }p=1,2,\ldots,ns-1,\quad t\in[0,2\pi]
\end{equation}
if $\varepsilon>0$ is chosen sufficiently small.




\begin{proof}[Proof of Theorem \ref{thm:main}]
Say the $j$th strand of the component $C_i$ of the closure of $B$ is parametrised by $(z_{i,j}(t),t)\subset\mathbb{C}\times[0,2\pi]$ such that $\arg g:\mathbb{C}\times[0,2\pi]\backslash B\to S^1$ is a fibration. We consider
\begin{equation}
g_t^s(u)\defeq g^s(u,t)\defeq\prod_{i,j}(u-z_{i,j}(t))^s.
\end{equation}
Since $c_p(t)$, $p=1,2,\ldots,n-1$, is a critical point of $g_t$, it is also a critical point of $g_t^s$ (with multiplicity one). The other critical points of $g_t^s$ are the roots $z_{i,j}(t)$, $i=1,2,\ldots,m$, $j=1,2,\ldots,n_i$, of $g_t$ and they have multiplicity $s-1$. We call a critical point of multiplicity one \textit{simple}. The critical points of $g_t^s$ can therefore be divided into two groups: Those critical points that are also critical points of $g_t$ (and are hence simple) and those that are not. 

Since $\arg g$ is a fibration, the critical values $v_p(t)=g(c_p(t),t)$ of $g$ have the property that $\tfrac{\partial \arg v_p(t)}{\partial t}$ never vanishes. It follows from $\arg g^s(c_p,t)=s\arg g(c_p,t)$ that the $n-1$ critical values $v_p(t)^s=g^s(c_p(t),t)$ of $g^s$ have the same property if $c_p(t)$ is a simple critical point, i.e., $\tfrac{\partial \arg v_p(t)^s}{\partial t}$ never vanishes.

Note that $G_{t,0}=g_t^s$. Hence, for any critical point of $g^s_t$ of multiplicity $m$ there is a neighbourhood $U\subset\mathbb{C}$ such that for all $\varepsilon>0$ chosen sufficiently small $G_{t,\varepsilon}$ has $m$ critical points in $U$ when counted with multiplicity. 

The values that a polynomial takes on its simple critical points are smooth functions of the coefficients. We can thus think of $n-1$ of the critical values $V_{\varepsilon,p}(t)$, say $p=1,2,\ldots,n-1$, of $G_{t,\varepsilon}$ as smooth functions $V_p(t)(\varepsilon)$ of $\varepsilon\geq 0$ with $V_p(t)(0)=v_p(t)^s$, where $v_p(t)=c_p(t)$ for a simple critical point $c_p(t)$ of $g_t$. Since $\arg g_t^s$ is a fibration, $\tfrac{\partial \arg v_p(t)^s}{\partial t}$ never vanishes and hence $\tfrac{\partial \arg V_p(t)(\varepsilon)}{\partial t}$ never vanishes as long as $\varepsilon$ is chosen sufficiently small.

What remains is a proof that the same statement holds for the critical values of $G_{t,\varepsilon}$ that do not correspond to simple critical points of $g_t^s$. These are the values $V_{\varepsilon,p}(t)$, $p=n,n+1,\ldots,ns-1$, that $G_{t,\varepsilon}$ takes on the critical points $C_{\varepsilon,p}(t)$ that lie in arbitrarily small neighbourhoods of $z_{i,j}(t)$, i.e., the roots of $g_t$ and therefore the critical points of $g_t^s$ of multiplicity $s-1$, when $\varepsilon$ is chosen small enough.

Let 
\begin{equation}
g_{i,j}(u,t)\defeq\prod_{k=1}^{m_i}\prod_{\ell=1}^{n_{i,k}}\left(u-z_{k,\ell}^i\left(\frac{t+2\pi (j-1)}{n_i}\right)\right)
\end{equation}
be the polynomial map whose vanishing set is the $j$th interval $[\tfrac{2\pi (j-1)}{n_i},\tfrac{2\pi j}{n_i}]$ of the braid $B_i$, i.e., $B_i\cap (\mathbb{C}\times [\tfrac{2\pi (j-1)}{n_i},\tfrac{2\pi j}{n_i}])$. Note that in general $g_{i,j}(u,2\pi)\neq g_{i,j}(u,0)$, but rather $g_{i,j}(u,2\pi)=g_{i,j+1}(u,0)$. We denote the critical points of $g_{i,j}(\cdot,t):\mathbb{C}\to\mathbb{C}$ by $c_{i,j,p}(t)$, $p=1,2,\ldots,s-1$. Since $B_i$ is assumed to be a P-fibered braid with parametrisation $(z_{k,\ell}^i(t),t)$, the critical values $v_{i,j,q}(t)\defeq g_{i,j}(c_{i,j,q}(t),t)$ of $g_{i,j}$ satisfy $\tfrac{\partial \arg v_{i,j,q}(t)}{\partial t}\neq 0$ for all $q=1,2,\ldots,s-1$, $t\in[0,2\pi]$. 

\ \\

\textbf{Claim:} For $\varepsilon>0$ sufficiently small, say less than some $\delta>0$, the critical points $C_{\varepsilon,p}(t)$, $p=n,n+1,\ldots,ns-1$, of $G_{t,\varepsilon}$, which do not correspond to simple critical points of $g_t^s$, are given by $z_{i,j}(t)+\varepsilon \varphi_{i,j,q}(t,\varepsilon)$ for some $i=1,2,\ldots,m$, $j=1,2,\ldots,n_i$ and $\varphi_{i,j,q}:[0,2\pi]\times[0,\delta]\to\mathbb{C}$, $q=1,2,\ldots,s-1$, a smooth function with $\varphi_{i,j,q}(t,0)=c_{i,j,q}(t)$.

Fix a choice of $i$ and $j$. Since $\arg g_{i',j'}$, $i'=1,2,\ldots,m$, $j=1,2,\ldots,n_{i'}$, being a fibration is an open condition, i.e., Eq. (\ref{eq:fib}) remains true under small perturbations of the coefficients of $g_{i',j'}$, we can assume without loss of generality  that $z_{k,\ell}^{i'}(t)\neq 0$ for all $i'=1,2,\ldots,m$, $k=1,2,\ldots,m_{i'}$, $\ell=1,2,\ldots,n_{i',k}$, and all $t\in[0,2\pi]$. To prove the claim we consider $G_{t,\varepsilon}$ as a continuous family of rational maps from the Riemann sphere $\mathbb{C}\cup\{\infty\}$ to itself. We can apply a change of coordinates $u\mapsto \widetilde{u}-z_{i,j}$ to $G_{t,\varepsilon}$ such that the factors in Eq. (\ref{eq:satbraid}) that correspond to $i$ and $j$ are of the form $u-\varepsilon z_{k,\ell}^i$. We divide the whole expression by $\varepsilon^{ns}$ and apply another coordinate change $\widetilde{u}\mapsto \widehat{u}\varepsilon$ such that for all $\varepsilon>0$ the function $G_{t,\varepsilon}(\widehat{u},t)$ has the same critical points as
\begin{align}
\frac{1}{\varepsilon^{ns}}G_{t,\varepsilon}(\widehat{u},t)=&\prod_{k=1}^{m_i}\prod_{\ell=1}^{n_{i,k}}\left(\widehat{u}-z_{k,\ell}^i\left(\frac{t+2\pi (j-1)}{n_i}\right)\right)\nonumber\\
&\times \underset{(i',j')\neq (i,j)}{\prod_{(i',j')=(1,1)}^{(m,n_i)}}\prod_{k=1}^{m_i'}\prod_{\ell=1}^{n_{i',k}}\left(\widehat{u}-\frac{1}{\varepsilon}\left(z_{i',j'}(t)+\varepsilon z_{k,\ell}^{i'}\left(\frac{t+2\pi (j'-1)}{n_{i'}}\right)\right)\right)\nonumber\\
=&g_{i,j}(\widehat{u},t)\underset{(i',j')\neq (i,j)}{\prod_{(i',j')=(1,1)}^{(m,n_i)}}\prod_{k=1}^{m_i'}\prod_{\ell=1}^{n_{i',k}}\left(\widehat{u}-\frac{1}{\varepsilon}\left(z_{i',j'}(t)+\varepsilon z_{k,\ell}^{i'}\left(\frac{t+2\pi (j'-1)}{n_{i'}}\right)\right)\right).
\end{align}

As $\varepsilon$ goes to zero, the roots of $G_{t,\varepsilon}(\widehat{u},t)$ that are not roots of $g_{i,j}(\widehat{u},t)$ tend to infinity, canceling with the pole at infinity of multiplicity $ns$, such that the limit function is the first factor $g_{i,j}(\widehat{u},t)$. This can be made explicit by another change of variable, where we consider $w=\tfrac{1}{\widehat{u}}$, leading to
\begin{align}
\label{eq:limit}
\frac{1}{\varepsilon^{ns}}G_{t,\varepsilon}(w,t)&=g_{i,j}(w,t)\underset{(i',j')\neq (i,j)}{\prod_{(i',j')=(1,1)}^{(m,n_i)}}\prod_{k=1}^{m_i'}\prod_{\ell=1}^{n_{i',k}}\left(\frac{\left(z_{i',j'}(t)+\varepsilon z_{k,\ell}^{i'}\left(\frac{t+2\pi (j'-1)}{n_{i'}}\right)\right)}{\varepsilon w}\right.\nonumber\\
&\hspace{4cm}\times\left.\left(\varepsilon\left(z_{i',j'}(t)+\varepsilon z_{k,\ell}^{i'}\left(\frac{t+2\pi (j'-1)}{n_{i'}}\right)\right)^{-1}-w\right)\right)\nonumber\\
&=g_{i,j}(w,t)\frac{1}{\varepsilon^{(n-n_i)s}}\underset{(i',j')\neq (i,j)}{\prod_{(i',j')=(1,1)}^{(m,n_i)}}\prod_{k=1}^{m_i'}\prod_{\ell=1}^{n_{i',k}}\left(\frac{\left(z_{i',j'}(t)+\varepsilon z_{k,\ell}^{i'}\left(\frac{t+2\pi (j'-1)}{n_{i'}}\right)\right)}{w}\right.\nonumber\\
&\hspace{5cm}\times\left.\left(\varepsilon\left(z_{i',j'}(t)+\varepsilon z_{k,\ell}^{i'}\left(\frac{t+2\pi (j'-1)}{n_{i'}}\right)\right)^{-1}-w\right)\right)
\end{align}

We can multiply the whole expression by $\varepsilon^{(n-n_i)s}$ and find that the right hand side has a well-defined limit as $\varepsilon$ goes to zero, which is
\begin{equation}
g_{i,j}(w,t)\underset{(i',j')\neq (i,j)}{\prod_{(i',j')=(1,1)}^{(m,n_i)}}\prod_{k=1}^{m_i'}\prod_{\ell=1}^{n_{i',k}}\frac{z_{i',j'}(t)}{w}\left(-w\right)=g_{i,j}(w,t)\underset{(i',j')\neq (i,j)}{\prod_{(i',j')=(1,1)}^{(m,n_i)}}\prod_{k=1}^{m_i'}\prod_{\ell=1}^{n_{i',k}}-z_{i',j'}(t),
\end{equation}
which is simply a non-zero multiple of $g_{i,j}(w,t)$ and therefore has the same critical points as $g_{i,j}(w,t)$.

Therefore the critical points of $G_{t,\varepsilon}(w,t)$ are identical to the critical points of the right hand side of Eq. (\ref{eq:limit}) for all $\varepsilon>0$ and for $\varepsilon=0$ the critical points of the right hand side of Eq. (\ref{eq:limit}) are equal to the critical points of $g_{i,j}(w,t)$.

Since simple critical points of rational maps are smooth functions of their coefficients and the critical points of $g_{i,j}$ are simple, the critical points of $G_{t,\varepsilon}$ are simple as long as $\varepsilon$ is sufficiently small, say less than some $\delta>0$, and they are given by smooth functions of $\varepsilon\in(0,\delta)$. Considering the variable changes that we have used, we find that they take the claimed form of $z_{i,j}(t)+\varepsilon \varphi_{i,j,q}(t,\varepsilon)$. Since $\varphi_{i,j,q}(t,\varepsilon)$ is smooth on $[0,2\pi]\times(0,\delta)$ it has a well-defined limit function as $\varepsilon$ goes to zero, which is precisely $c_{i,j,q}(t)$. 

Strictly speaking, we have used these arguments for a fixed value of $t$, but since $t$ takes values in the compact set $S^1$, we can guarantee that for all $t$ the critical points $C_{\varepsilon,p}(t)$, $p=n,n+1,\ldots,ns-1$, of $G_{t,\varepsilon}$, which do not correspond to simple critical points of $g_t^s$, take the form $z_{i,j}+\varepsilon \varphi_{i,j,q}(t,\varepsilon)$ with $\varphi_{i,j,q}(t,0)=c_{i,j,q}(t)$ and $q\in\{1,2,\ldots,s-1\}$.

This proves the claim. We will now see how it implies that the satellite braid is P-fibered. Let $C_{\varepsilon,p}(t)=z_{i,j}+\varepsilon \varphi_{i,j,q}(t,\varepsilon)$ be a critical point of $G_{t,\varepsilon}$ that does not correspond to a simple critical point of $g_t^s$ and let $V_{\varepsilon,p}(t)=G_{t,\varepsilon}(C_{\varepsilon,p}(t))$.

We calculate
\begin{equation}
\frac{\partial \arg V_{\varepsilon,p}(t)}{\partial t}=\frac{\partial }{\partial t}\text{Im Log\ }G_{t,\varepsilon}(z_{i,j}+\varepsilon \varphi_{i,j,q}(t,\varepsilon))=T_1+T_2,
\end{equation}
where

\begin{align}
\label{eq:t1}
T_1&\defeq\sum_{k=1}^{m_i}\sum_{\ell=1}^{n_{i,k}}\left\{\frac{1}{\left|\varepsilon \left(\varphi_{i,j,q}(t,\varepsilon)-z_{k,\ell}^i\left(\frac{t+2\pi (j-1)}{n_i}\right)\right)\right|^2}\right.\nonumber\\
&\hspace{1cm}\left.\times\left[\text{Re}\left(\varepsilon\left(\varphi_{i,j,q}(t,\varepsilon)-z_{k,\ell}^i\left(\frac{t+2\pi (j-1)}{n_i}\right)\right)\right)\text{Im}\left(\varepsilon\tfrac{\partial \left(\varphi_{i,j,q}(t,\varepsilon)-z_{k,\ell}^i\left(\frac{t+2\pi (j-1)}{n_i}\right)\right)}{\partial t}\right)\right.\right.\nonumber\\
&\hspace{1cm}\ \ \ -\left.\left.\text{Im}\left(\varepsilon\left(\varphi_{i,j,q}(t,\varepsilon)-z_{k,\ell}^i\left(\frac{t+2\pi (j-1)}{n_i}\right)\right)\right)\text{Re}\left(\varepsilon\tfrac{\partial \left(\varphi_{i,j,q}(t,\varepsilon)-z_{k,\ell}^i\left(\frac{t+2\pi (j-1)}{n_i}\right)\right)}{\partial t}\right)\right]\right\}
\end{align}
and
\begin{align}
\label{eq:t2}
T_2&\defeq\underset{(i',j')\neq (i,j)}{\sum_{(i',j')=(1,1)}^{m,n_{i'}}}\left\{\frac{1}{\left|z_{i,j}(t)+\varepsilon \varphi_{i,j,q}(t,\varepsilon)-z_{i',j'}(t)-\varepsilon z_{k,\ell}^{i'}\left(\frac{t+2\pi (j'-1)}{n_{i'}}\right)\right|^2}\right.\nonumber\\
&\hspace{1.5cm}\left.\times\left[\text{Re}\left(z_{i,j}(t)+\varepsilon\varphi_{i,j,q}(t,\varepsilon)-z_{i',j'}(t)-\varepsilon z_{k,\ell}^{i'}\left(\frac{t+2\pi (j'-1)}{n_{i'}}\right)\right)\right.\right.\nonumber\\
&\hspace{2cm}\left.\left.\times\text{Im}\left(\tfrac{\partial \left(z_{i,j}(t)+\varepsilon\varphi_{i,j,q}(t,\varepsilon)-z_{i',j'}(t)-\varepsilon z_{k,\ell}^{i'}\left(\frac{t+2\pi (j'-1)}{n_i}\right)\right)}{\partial t}\right)\right.\right.\nonumber\\
&\hspace{1.7cm}-\left.\left.\text{Im}\left(z_{i,j}(t)+\varepsilon\varphi_{i,j,q}(t,\varepsilon)-z_{i',j'}(t)-\varepsilon z_{k,\ell}^{i'}\left(\frac{t+2\pi (j'-1)}{n_{i'}}\right)\right)\right.\right.\nonumber\\
&\hspace{2cm}\left.\left.\times\text{Re}\left(\tfrac{\partial \left(z_{i,j}(t)+\varepsilon\varphi_{i,j,q}(t,\varepsilon)-z_{i',j'}(t)-\varepsilon z_{k,\ell}^{i'}\left(\frac{t+2\pi (j'-1)}{n_{i'}}\right)\right)}{\partial t}\right)\right]\right\}.
\end{align}

Note that the claim above imlies that the first term $T_1$ becomes $\tfrac{\partial \arg g_{i,j}(c_{i,j,q}(t),t)}{\partial t}$ as $\varepsilon$ goes to zero. Since $B_i$ is P-fibered, this expression is non-zero. The second term also has a well-defined limit, which only depends on differences of the roots of $g_t$, i.e., on $z_{i,j}(t)$ and $z_{i',j'}(t)$. 

If we perform this same calculation not for $B_i$, but for $B_i^{r_i}$, we have to multiply the first term by $r_i$, while the limit of the second term is unchanged. It follows that as long as $r_i$ has a sufficiently large modulus, the whole expression becomes non-zero. Again this can be achieved for all $i,j,p$ and $t$ simultaneously because of compactness of $S^1$.

This concludes the proof of Theorem \ref{thm:main}. 


\end{proof}

\subsection{Some examples}
\label{sec:example}
We conclude this section with some examples.

\textbf{Example 1:}
Consider the Hopf link as the closure of the P-fibered braid given by
\begin{align}
z_{1}(t)&=\rme^{\rmi t}, &t\in[0,2\pi],\\
z_{2}(t)&=-\rme^{\rmi t}, &t\in[0,2\pi].
\end{align}


Since both components $C_1$ and $C_2$ of the closure consist of only one strand we have omitted the index $j$, which usually runs through the strands making up one component, from the parametrisation.

We want to perform the satellite operation with $B_1=(\sigma_2^{-1}\sigma_1^{-1})^2$, a P-fibered braid closing to the (negative) trefoil knot, and $B_2=(\sigma_1\sigma_2^{-1})^2$, a P-fibered braid closing to the figure eight knot. Both are braids on three strands, so that the condition from Theorem \ref{thm:main} is satisfied.

Since both braid closures have only one component, we omit the index $k$ (which usually runs through the components) in their parametrisations. $B_1$ and $B_2$ have the following parametrisations as P-fibered braids \cite{bode:2016lemniscate}:
\begin{align}
z_{\ell}^1(t)&=\rme^{\rmi (-2t+2\pi \ell)/3}, &\ell=1,2,3,\ t\in[0,2\pi]\\
z_{\ell}^2(t)&=\cos\left(\frac{2t+2\pi \ell}{3}\right)+\frac{\rmi}{2}\sin\left(\frac{2(2t+2\pi \ell)}{3}\right), &\ell=1,2,3,\ t\in[0,2\pi].
\end{align}

By the definition of the satellite operation we get a braid that is parametrised by
\begin{equation}
\bigcup_{i=1}^2\bigcup_{\ell=1}^3\left(z_{i,j}(t)+\varepsilon z_{\ell}^i(t),t\right)\subset\mathbb{C}\times[0,2\pi].
\end{equation}

We define $G_{t,\varepsilon}$ as in Eq. (\ref{eq:satbraid}) and find that for $\varepsilon=1/10$ we obtain a fibration $\arg G_{t,1/10}:(\mathbb{C}\times S^1)\to S^1$. 



The braid word of the satellite braid is given by 
\begin{equation}
B(B;B_1,B_2)=\sigma_1\sigma_2^{-1}\sigma_5^{-1}\sigma_4^{-1}\sigma_3\sigma_4\sigma_5\sigma_2\sigma_3\sigma_4\sigma_1\sigma_2\sigma_3\sigma_2^{-1}\sigma_1^{-1}\sigma_4\sigma_5^{-1}\sigma_3\sigma_4\sigma_5\sigma_2\sigma_3\sigma_4\sigma_1\sigma_2\sigma_3.
\end{equation}

\ \\

\textbf{Example 2:} We should look at one more example where the closure of $B$ has components that consist of more than one strand. For this we take the figure-eight braid $B=(\sigma_1\sigma_2^{-1})^2$ from before, so
\begin{equation}
z_{j}(t)=\cos\left(\frac{2t+2\pi j}{3}\right)+\frac{\rmi}{2}\sin\left(\frac{2(2t+2\pi j)}{3}\right), \qquad j=1,2,3.
\end{equation}
We have omitted the index $i$, since the closure only has one component.

For the satellite operation we choose the 2-strand braid $B_1=\sigma_1$. Since the figure-eight knot only has one component, we can only pick one braid and the condition on the number of strands in Theorem \ref{thm:main} becomes obsolete.

The braid $B_1$ is a P-fibered braid given by
\begin{equation}
z_{\ell}(t)=\rme^{\rmi (t+\pi \ell)/2},\qquad \ell=1,2.
\end{equation}
Again we have omitted the index $k$ for the same reason as above.

We obtain the satellite braid as
\begin{equation}
\label{eq:exx}
\bigcup_{j=1}^3\bigcup_{\ell=1}^3\left(z_{j}(t)+\varepsilon z_{\ell}\left(\frac{t+2\pi j}{3}\right),t\right)\subset\mathbb{C}\times[0,2\pi].
\end{equation}

Using Mathematica we find that (depending on the strand of $B$) $T_2$ in Eq. (\ref{eq:t2}) takes values between -2.77128 and 2.77128 (rounded) when $t$ goes from 0 to $2\pi$, while $T_1$ in Eq. (\ref{eq:t1}) is constant at $1/3$ as $\varepsilon$ goes to zero. It follows that $\tfrac{\partial \arg V_{\varepsilon,p}(t)}{\partial t}=T_1+T_2$ has zeros for some $p\in\{1,2,3,4,5\}$ even as $\varepsilon$ goes to zero and hence the geometric braid in Eq. (\ref{eq:exx}) is not P-fibered when $\varepsilon$ is small. However if we use $B_1^{r_1}$ with $|r_1|\geq 9\geq 3\times2.77128$ instead of $B_1$, we obtain a P-fibered braid $\mathcal{B}(B;B_1^{r_1})$.

\section{Every link is a sublink of a real algebraic link}
\label{sec:sublink}
We say that a link $L$ is a \textit{sublink} of a link $L'$ if $L'$ is the disjoint union of $L$ and some other link $L''$. In other words, deleting the components of $L'$ that make up $L''$ leaves us with $L$.

It is known that every link is a sublink of a fibered link. One proof of this fact by Stallings is by taking a braid $B$ that closes to the link in question and add one more strand to it, which threads through $B$ in such a way that it creates an alternating and hence homogeneous braid \cite{stallings}. We are going to use a very similar technique to show that every link is a sublink of a real algebraic link using Theorem \ref{thm:homo} and Theorem \ref{thm:square} 

\begin{proof}[Proof of Theorem \ref{thm:sub}]
Let $B$ be a braid on $n$ strands. We obtain another braid $B'$ on $2n$ strands by replacing each generator in the word of $B$ as follows:
\begin{align}
\sigma_i^{(-1)^{i+1}}&\mapsto\sigma_i^{(-1)^{i+1}},\nonumber\\
\sigma_i^{(-1)^{i}}&\mapsto \left(\prod_{j=1}^{n-i-1}\sigma_{n+1-j}^{(-1)^{n-j}}\right)\sigma_{i+1}^{(-1)^{i}}\prod_{j=1}^s\sigma_{i}^{(-1)^{i+1}}.
\end{align}

The rule is depicted in Figure \ref{fig:subhomo}. Geometrically, $B'$ is obtained from $B$ by adding $n$ strands to the right of it and threading the first of the added strands through $B$ in an alternating fashion such that it shifts the crossing $\sigma_{i}^{(-1)^{i}}$ one position to the right, i.e., it becomes a $\sigma_{i+1}^{(-1)^i}$.

\begin{figure}
\labellist
\Large
\pinlabel $n$ at 1900 80
\pinlabel $n$ at 4100 80
\endlabellist
\centering
\includegraphics[height=7cm]{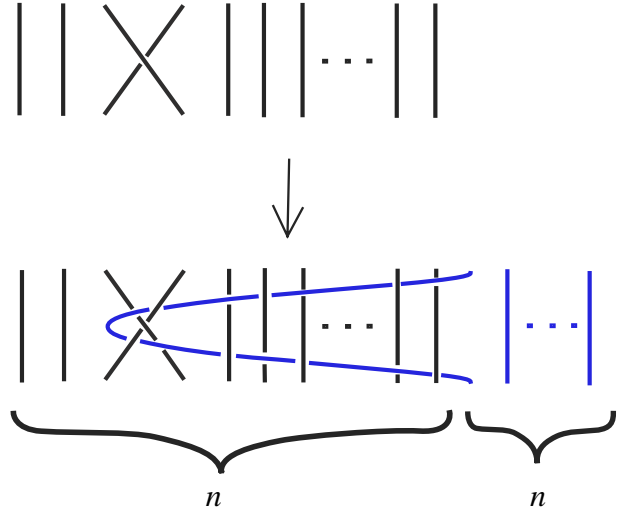}
\caption{The replacement rule for $\sigma_i^{(-1)^{i}}$. We double the number of strands of the braid $B$ and thread the first of the added strands through $B$ to shift the crossing $\sigma_i^{(-1)^i}$ to a position for which it has the desired sign.\label{fig:subhomo}}
\end{figure}

Note that the map $\mathbb{B}_{2n}\to\mathbb{B}_n$ that forgets the strands $1,2,\ldots,n$ maps $B'$ to $B$ and the map that forgets the strands $n+1,n+2,\ldots,2n$ maps $B'$ to the trivial braid. Let $\pi_n:\mathbb{B}_{n}\to S_{n}$ be the permutation representation, where $S_{n}$ is the symmetric group on $n$ elements. Then $\pi_{2n}(B')$ maps $j$ to $\pi_n(B)(j)$ if $j\in\{1,2,\ldots,n\}$ and to $j$ if $j\in\{n+1,n+2,\ldots,2n\}$. Thus the permutation of the strands of $B'$ splits into two disjoint sets of cycles, one involving the strands $j=1,2,\ldots,n$ and the other being the trivial permutation of the remaining strands $j=n+1,n+2,\ldots,2n$.

The braid $B'$ is alternating. This technique of threading an added strand through the braid $B$ to obtain an alternating braid is due to Stallings \cite{stallings}.

We compose $B'$ with $\prod_{k=1}^n\left(\prod_{j=1}^n\sigma_{n+k-j}^{(-1)^{n+k-j+1}}\right)$ (depicted in Figure \ref{fig:alt} for $n=3$) and call the resulting braid $B''$. Note that $B''$ is also an alternating braid and since every generator appears at least once (either with a positive or negative sign), it is homogeneous. 

\begin{figure}
\centering
\includegraphics[height=4cm]{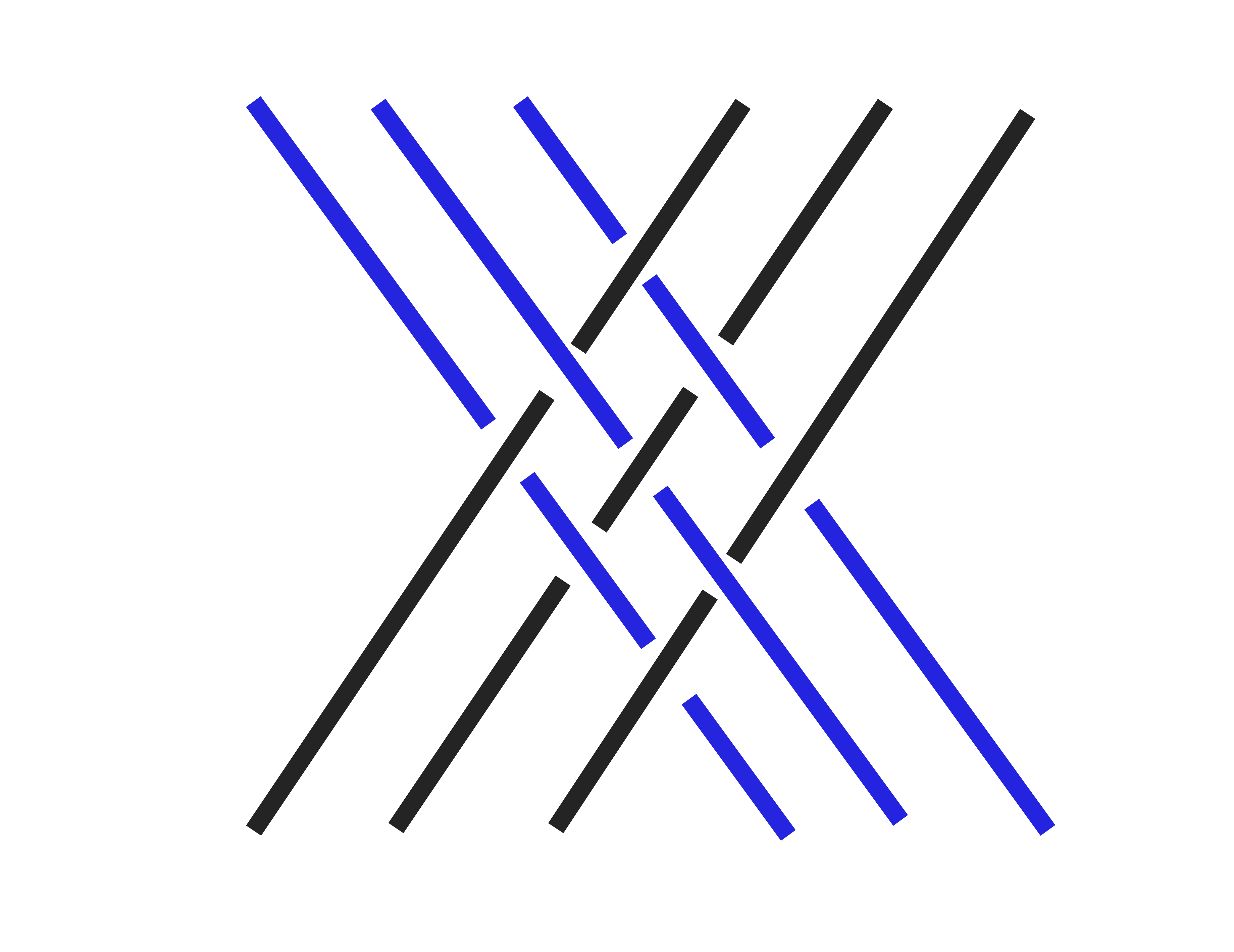}
\caption{The alternating braid $\prod_{k=1}^n\left(\prod_{j=1}^n\sigma_{n+k-j}^{(-1)^{n+k-j+1}}\right)$ for $n=3$. \label{fig:alt}}
\end{figure}
\begin{figure}
\labellist
\Large
\pinlabel a) at 10 1300
\pinlabel b) at 1200 1300
\pinlabel c) at 2800 1300
\pinlabel d) at 1200 -350
\endlabellist
\centering
\includegraphics[height=5cm]{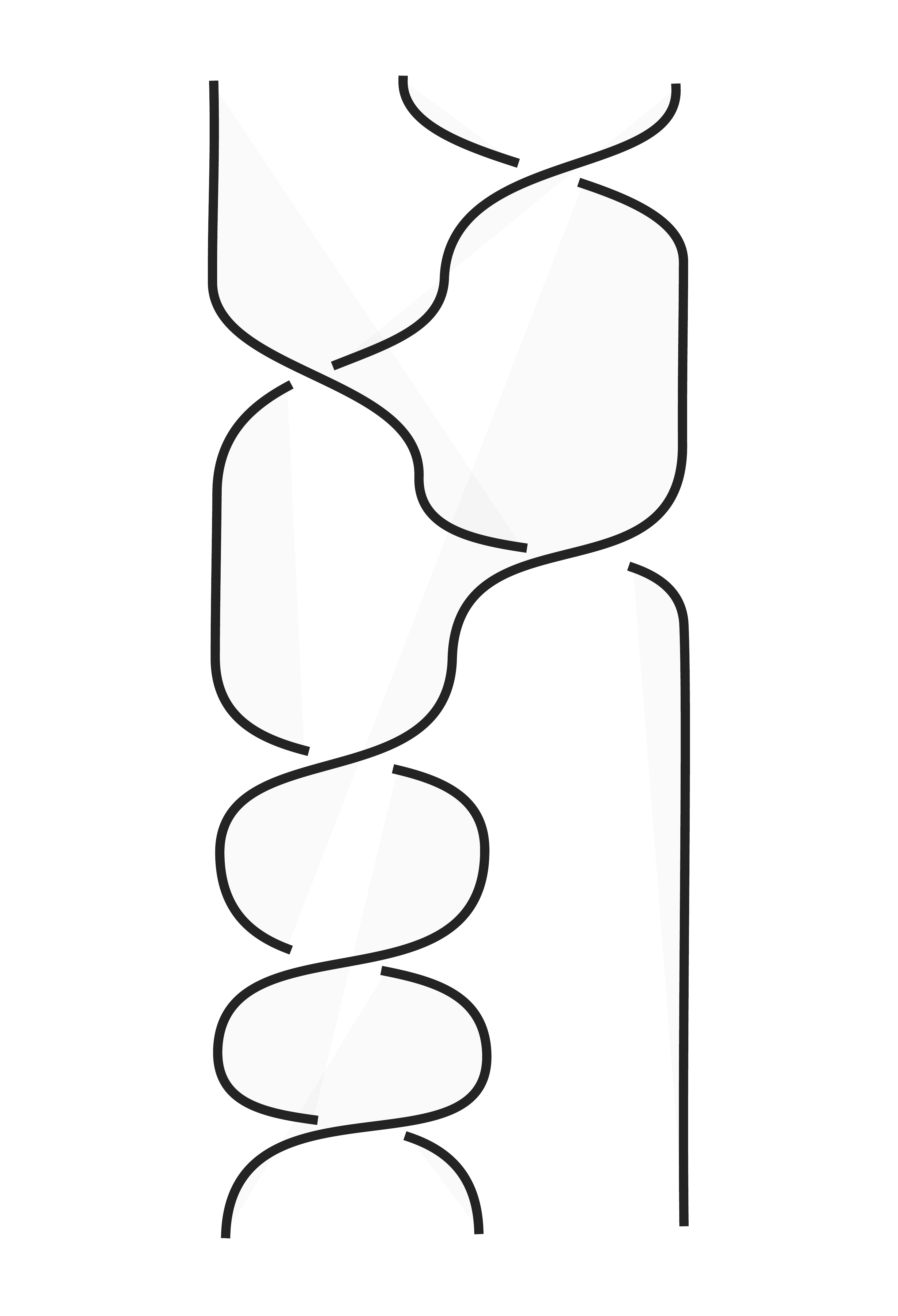}\qquad
\includegraphics[height=6cm]{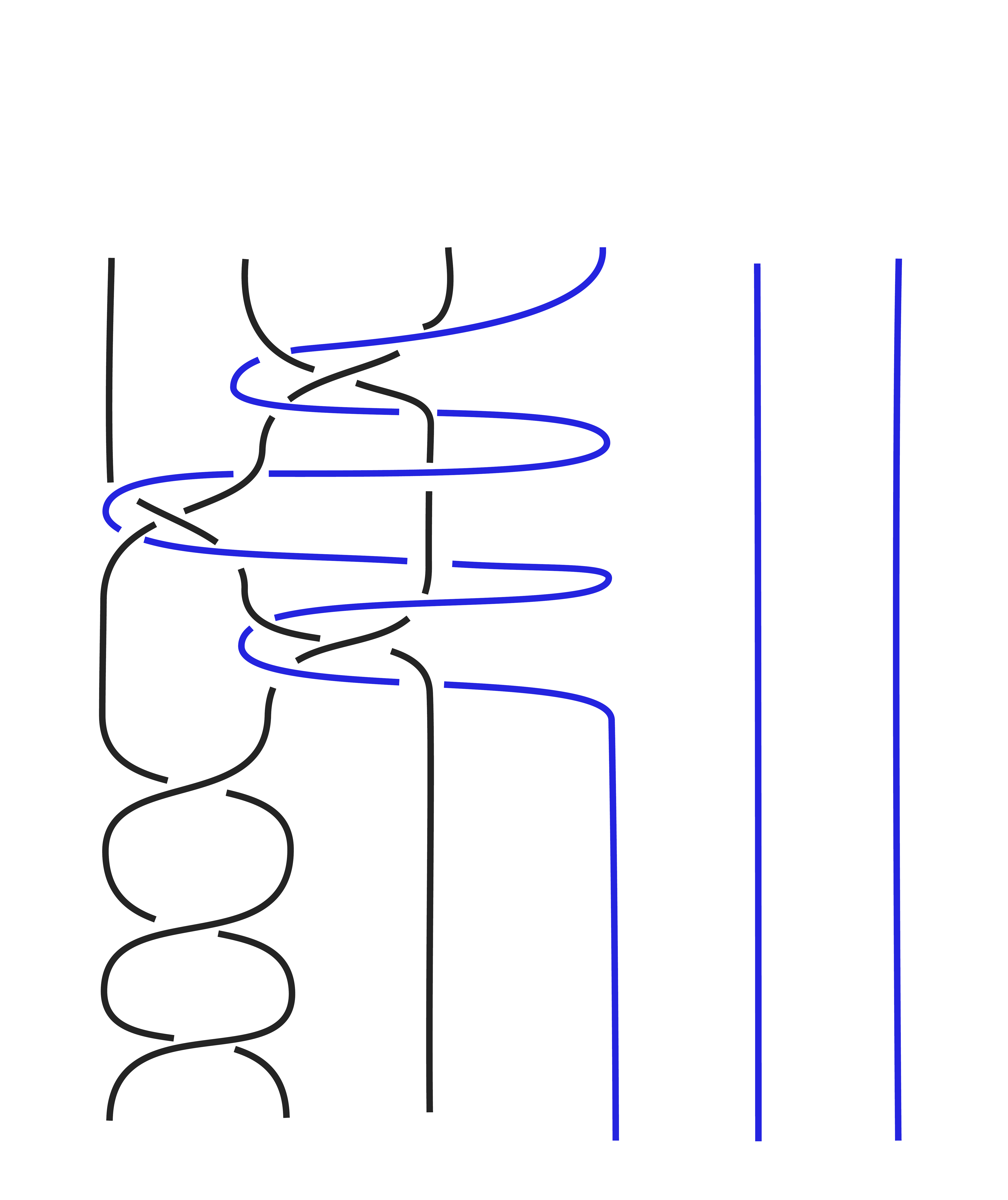}\qquad
\includegraphics[height=5cm]{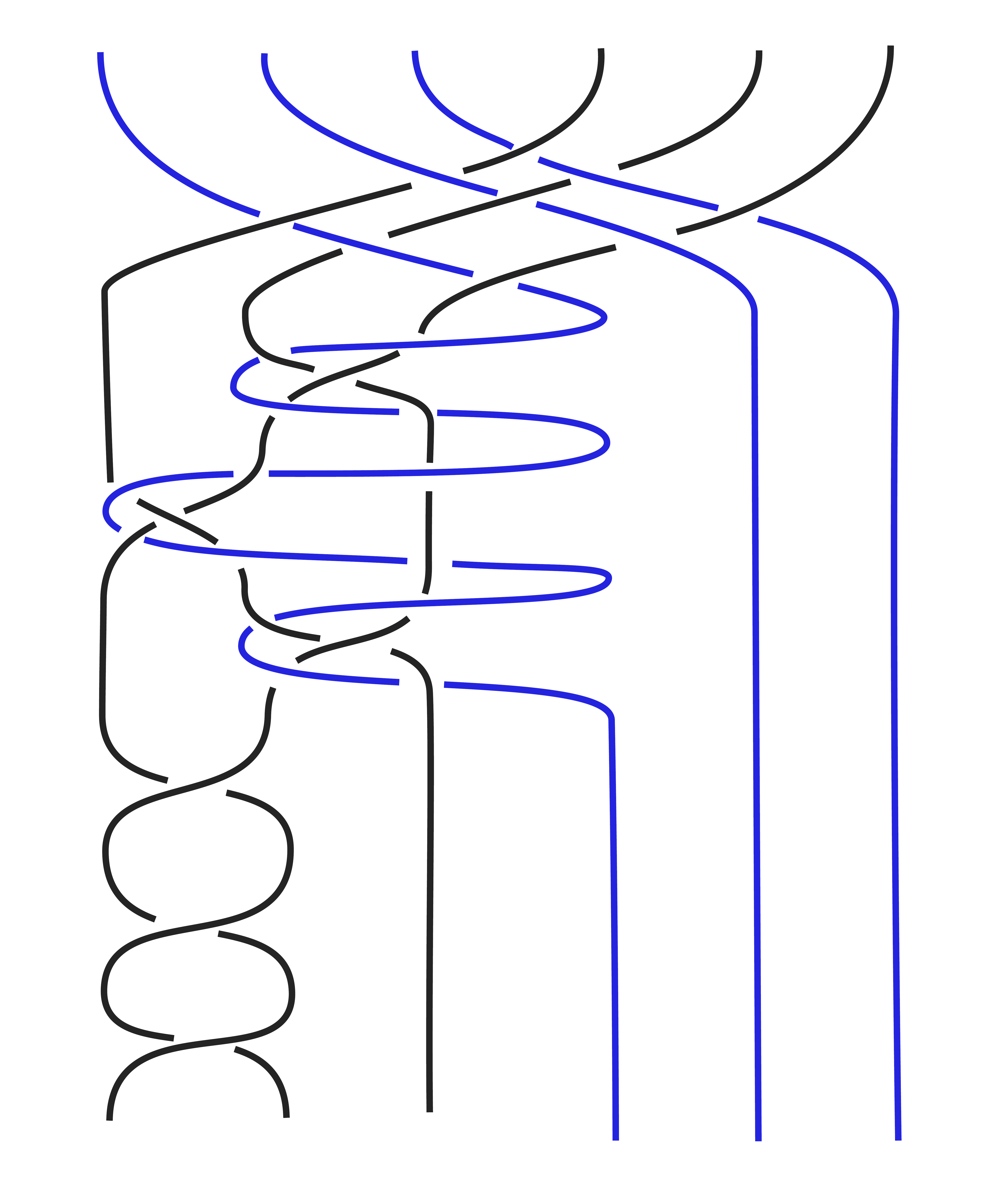}\\
\vspace{1cm}
\includegraphics[height=8cm]{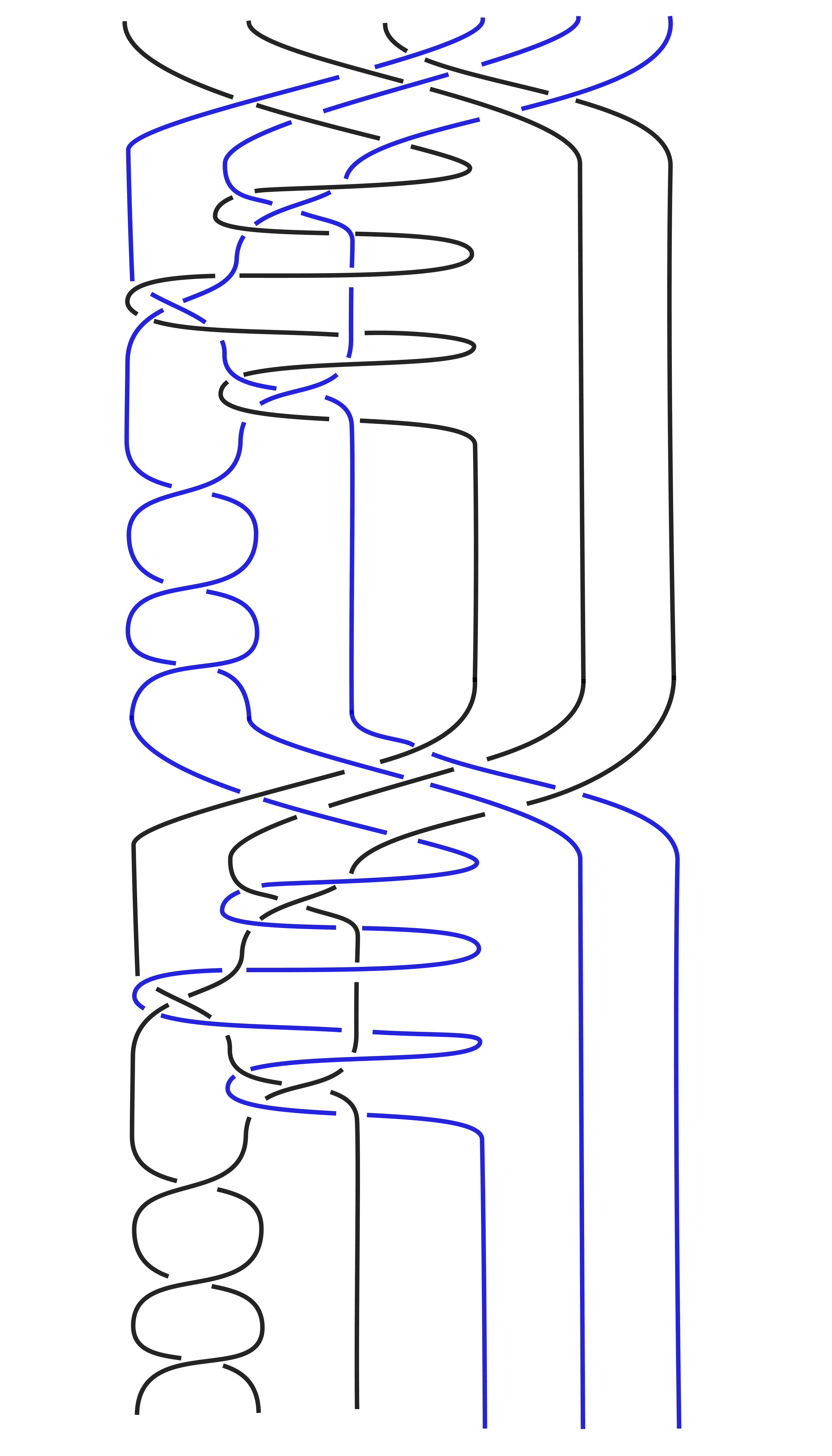}
\caption{The example of the $5_2$-braid. a) A non-homogeneous braid $B$ that closes to the knot $5_2$. b) We double the number of strands and thread the first of the added strands through $B$ to obtain an alternating braid $B'$. c) Composing this with the braid in Figure \ref{fig:alt} yields the alternating (and hence homogeneous) braid $B''$. d) The braid $B''^2$ is the square of a homogeneous braid and its closure consists of two components, each of which give the $5_2$-knot. \label{fig:ex52}}
\end{figure}

Consider now the alternating, homogeneous braid $(B'')^2$. The permutation induced by $(B'')^2$ is given 
\begin{equation}
j\mapsto\begin{cases}
\pi_n(B)(j), &\text{ if }j\in\{1,2,\ldots,n\},\\
n+\pi_n(B)(j-n) &\text{ else.}
\end{cases}
\end{equation}

Note that the map $\mathbb{B}_{2n}\to\mathbb{B}_n$ that forgets the strands $1,2,\ldots,n$ maps $(B'')^2$ to $B$ and the map that forgets the strands $n+1,n+2,\ldots,2n$ maps $(B'')^2$ to $B$ too. Thus the closure of $(B'')^2$, which is a real algebraic link by Theorem \ref{thm:homo} and Theorem \ref{thm:square}, consists of two copies of the closure of $B$, which are linked in some way. In particular, the closure of $B$ is a sublink of the closure of $(B'')^2$.
\end{proof}

Figure \ref{fig:ex52} shows the braids $B$, $B'$, $B''$ and $(B'')^2$ for the example braid $B=\sigma_1^3\sigma_2\sigma_1^{-1}\sigma_2$, which closes to the (non-fibered) knot $5_2$.

Regarding Conjecture \ref{conj:bene} it is reassuring that every link is a sublink of a real algebraic link, given that we already know that every link is a sublink of a fibered link. Theorem \ref{thm:sub} also highlights another stark contrast between algebraic and real algebraic links, since every sublink of an algebraic link is also algebraic, while real algebraic links can have arbitrarily complicated sublinks.

\section{Twisting}
\label{sec:twist}
The positive half-twist of $n$ strand is given by
\begin{equation}
\Delta_n\defeq (\sigma_1\sigma_2\ldots\sigma_{n-1})(\sigma_1\sigma_2\ldots\sigma_{n-2})\ldots(\sigma_1\sigma_2)\sigma_1
\end{equation}
and plays an important role in the braid group. Its square, the full twist $\Delta_n^2$ generates the center of the braid group on $n$ strands and $\Delta_n$ is the key element to defining the Garside Normal Form of a braid. In this form, every braid $B$ on $n$ strands is equivalent to $\Delta_n^{-r}A$ for some $r\in\mathbb{Z}$ and some positive braid $A$ (cf. e.g. \cite{algo}). It follows that any braid can be turned into a positive braid by adding a certain number of positive half-twists to it. Since positive braid closures are fibered, there is for every braid $B$ on $n$ strands a lowest exponent $k$ such that $\Delta_n^{2k} B$ is fibered. It is not too difficult to see that $k$ can be taken to be less than $\tfrac{k_-+1}{2}$, where $k_-$ is the number of negative crossings of $B$ \cite{algo}. In this section we use the theory of P-fibered braids to prove Theorem \ref{thm:twist}, which establishes a better bound on $k$.




Let
\begin{equation}
Y_{i}=\sigma_{i}^{-1}\sigma_{i-1}^{-1}\ldots\sigma_2^{-1}\sigma_1^2\sigma_2\sigma_3\ldots\sigma_{i}\qquad\text{ if }i\geq2
\end{equation}
and $Y_1=\sigma_1^2$ be braids on $n$ strands. Furthermore, let 
\begin{equation}
X_i=\begin{cases}
Y_{\tfrac{i+1}{2}} &\text{ if }i\text{ is odd,}\\
Y_{\tfrac{i}{2}+\lfloor\tfrac{s-1}{2}\rfloor}&\text{ if }i\text{ is even}.
\end{cases}
\end{equation}

An example of such a braid on four strands, namely $Y_3=X_2$, is shown in Figure \ref{fig:rudolph}a). The corresponding motion of points in the complex plane is shown in Figure \ref{fig:rudolph}b). Note that these braids can be parametrised such that only one point is non-stationary. It moves in front of other strands, makes one twist around the first strand and comes back to its original starting position.

\begin{proposition}[cf. \cite{bode:real, adicact, bode:review, rudolph}]
\label{rudolph}
For every parametrisation $(0,v_1(t),v_2(t),\ldots,v_{n-1}(t))$ of any braid of the form $\prod_{j=1}^k X_{i_j}^{\epsilon_j}$, $\epsilon_j\in\{\pm 1\}$, there is a loop in the space of complex polynomials $g_t$, $t\in S^1$, such that the roots of $g(u,t)=g_t(u)$ form the braid $\prod_{j=1}^k \sigma_{i_j}^{\epsilon_j}$ and the critical values of $g_t$ are $v_1(t),v_2(t),\ldots,v_{n-1}(t)$.
\end{proposition}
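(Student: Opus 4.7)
The plan is to build $g_t$ inductively, processing one factor $X_{i_j}^{\epsilon_j}$ at a time. The key simplification afforded by the $X_i$ braids is that each of them corresponds to a motion of critical values in which exactly one critical value is non-stationary: it makes a loop around the fixed critical value at $0$, while the remaining $n-2$ critical values stand still. This reduces the global problem to $k$ elementary lifting problems.

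First, I would subdivide $[0,2\pi]$ into $k$ consecutive closed subintervals $I_1,\ldots,I_k$ so that on $I_j$ the prescribed parametrisation $(0,v_1(t),\ldots,v_{n-1}(t))$ realises precisely the $j$th factor $X_{i_j}^{\epsilon_j}$. Second, using surjectivity of the Lyashko--Looijenga map (which sends a monic polynomial to the unordered tuple of its critical values), I would construct an initial polynomial $g_0$ whose critical values are exactly $(0,v_1(0),\ldots,v_{n-1}(0))$ and whose simple roots are placed so that the labelling of strands agrees with the braid word convention at $t=0$.

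Third, on each subinterval $I_j$ I would lift the given path of critical values to a smooth family of monic polynomials. Away from the discriminant locus, where two critical values would collide, the Lyashko--Looijenga map is a finite covering of the configuration space of $n-1$ points in $\mathbb{C}$; since the prescribed paths avoid this discriminant by assumption on the braid form, path-lifting yields a unique family $g_t$ on $I_j$ starting from the polynomial produced at the end of $I_{j-1}$. Concatenating these lifts gives a continuous loop $g_t$ on $[0,2\pi]$, periodic because the underlying loop of critical values is.

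The remaining and most delicate step is to verify that the braid traced by the roots of $g_t$ on $I_j$ is exactly $\sigma_{i_j}^{\epsilon_j}$, with the correct index $i_j$. Because during $I_j$ only one critical value moves, and it executes a single loop around the critical value at $0$, the monodromy of the Lyashko--Looijenga cover forces precisely two roots of $g_t$ to undergo a half-twist while the other $n-2$ roots deform only slightly and return to their starting positions; the sign of the half-twist is dictated by the orientation $\epsilon_j$ of the loop, and the index $i_j$ is recorded by the horizontal positions of the two swapping roots. I expect this final monodromy and labelling bookkeeping --- especially the task of tracking across $I_j$-boundaries which pair of roots corresponds to the moving critical value --- to be the main obstacle. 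It can be addressed by fixing an explicit local model for the lift of each elementary factor, as done in the constructions of \cite{rudolph} and \cite{bode:real}, and then propagating the labels forward one subinterval at a time.
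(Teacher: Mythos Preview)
The paper does not actually prove this proposition; it defers entirely to the cited references \cite{bode:real, adicact, bode:review, rudolph}, adding only the remark that the conjugating element appearing in the version stated in \cite{bode:review} is in fact trivial. Your outline---subdivide according to the factors $X_{i_j}^{\epsilon_j}$, lift the prescribed path of critical values through the Lyashko--Looijenga map, and then verify that the monodromy on the roots over each subinterval is the single crossing $\sigma_{i_j}^{\epsilon_j}$---is exactly the strategy carried out in those references, so in spirit your proposal matches the intended argument.

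Two points deserve correction. First, the Lyashko--Looijenga map from \emph{all} monic degree-$n$ polynomials to unordered $(n-1)$-tuples of critical values is not a finite covering: translations $p(u)\mapsto p(u-c)$ leave the critical values unchanged, so the fibres are at least one-dimensional. One must first normalise (e.g.\ require the coefficient of $u^{n-1}$ to vanish), after which the map is an $n^{n-1}$-sheeted branched cover and your path-lifting argument goes through. Second, your sentence ``periodic because the underlying loop of critical values is'' is backwards: a lift of a loop through a covering need not close up. What actually forces $g_{2\pi}=g_0$ is the step you postpone to the end, namely the verification that the roots trace precisely the braid $\prod_j\sigma_{i_j}^{\epsilon_j}$; once that is established, the roots at $t=2\pi$ are a permutation of those at $t=0$ and the monic polynomial is recovered. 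So the monodromy check is not merely bookkeeping---it is what closes the loop. With these two adjustments your sketch is sound and coincides with the approach in the cited literature.
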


\begin{figure}
\labellist
\Large
\pinlabel a) at 10 1300
\pinlabel b) at 1200 1300
\pinlabel c) at 10 -50
\pinlabel d) at 1800 -50
\endlabellist
\centering
\includegraphics[height=4cm]{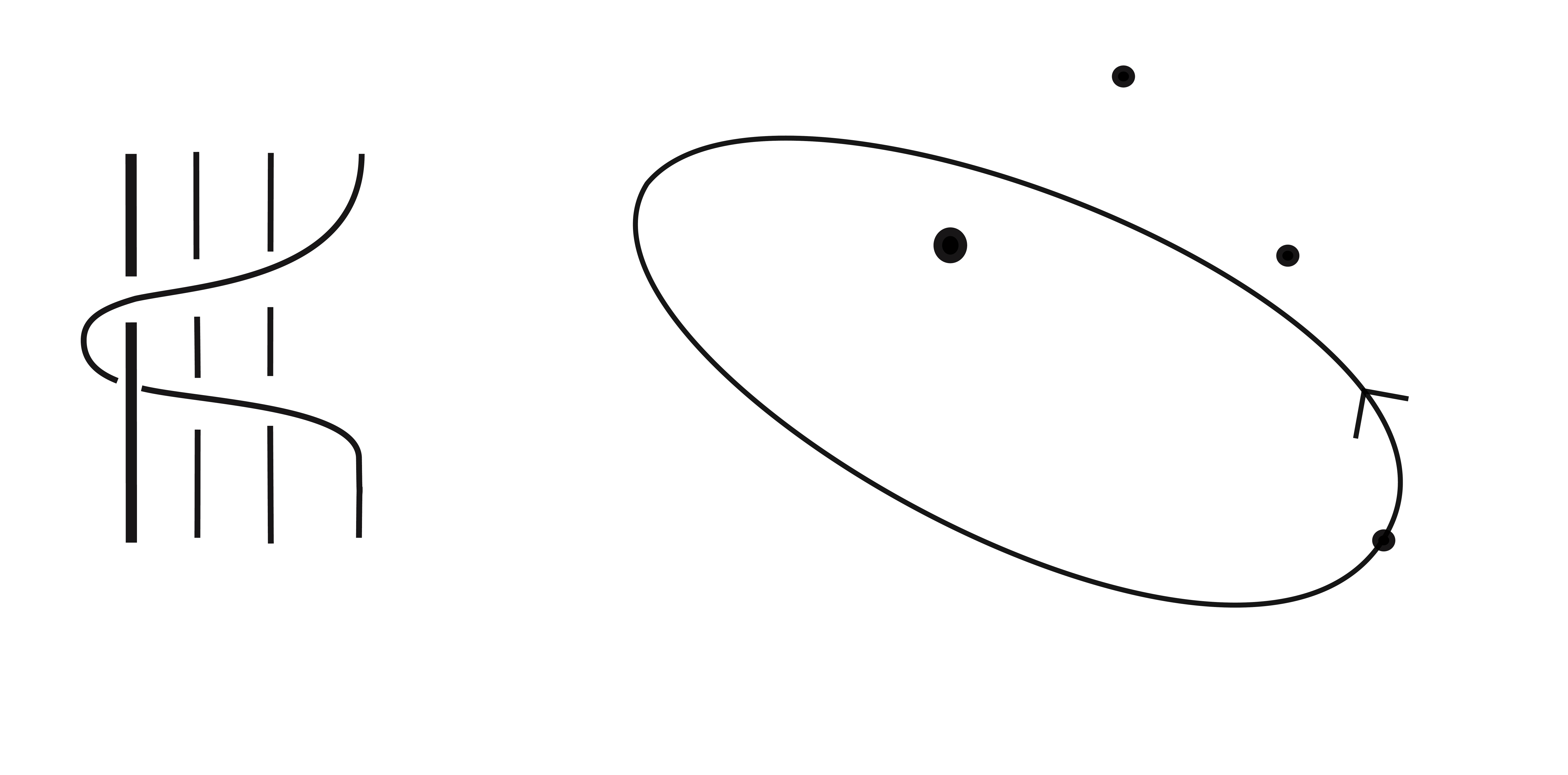}
\includegraphics[height=6cm]{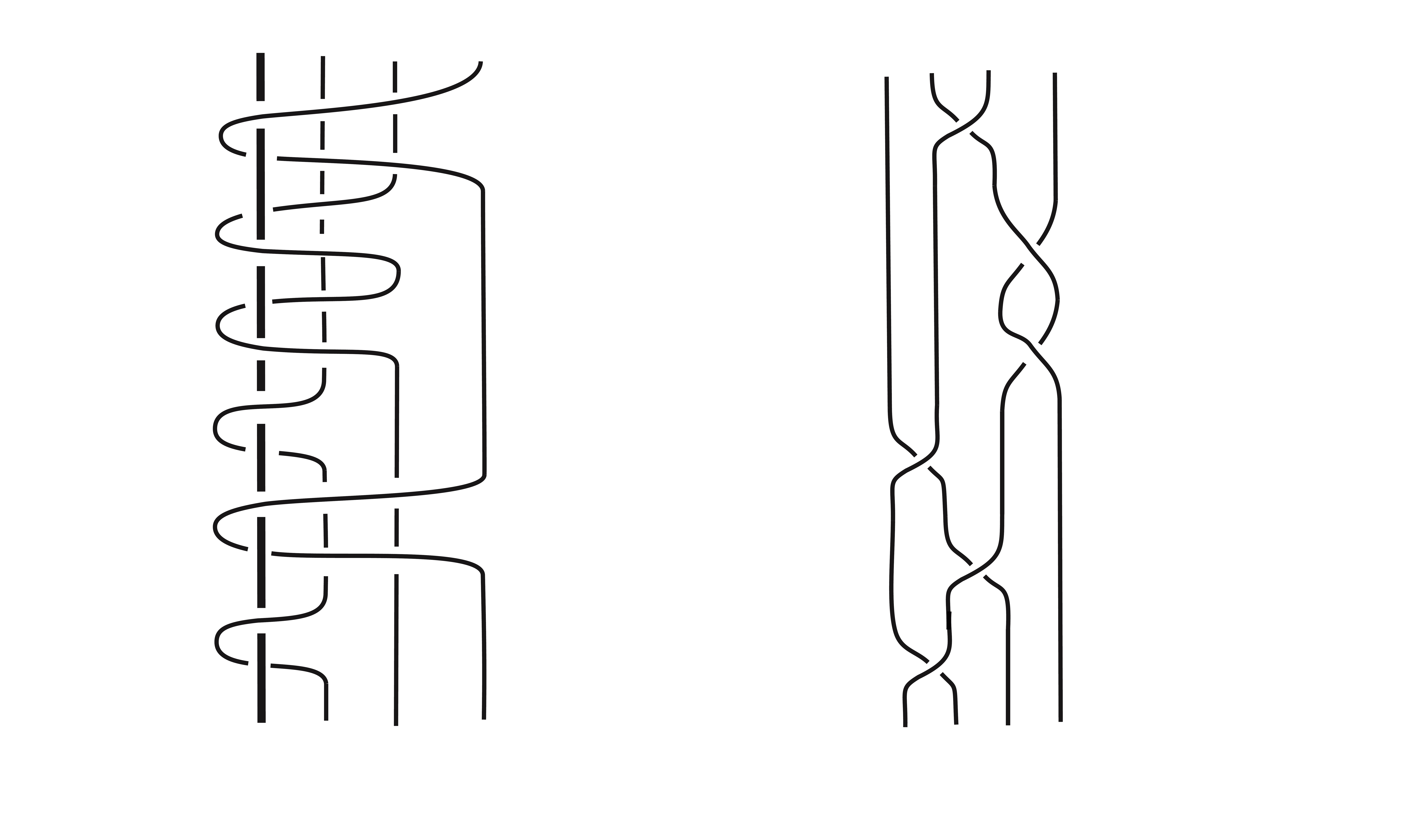}
\caption{a) The braid $Y_3$ on 4 strands which equals $X_2$. b) The same braid as a motion of points in the complex plane. c) The braid $X_1X_2X_1X_3^{-2}X_2$. As a geometric braid it can be regarded as the union of the critical values of a loop of polynomials $g_t$ and the 0-strand $(0,t)\subset\mathbb{C}\times[0,2\pi]$, $t\in[0,2\pi]$. d) The roots of $g_t$ can be taken to form the braid $\sigma_1\sigma_2\sigma_1\sigma_3^{-2}\sigma_2$. \label{fig:rudolph}}
\end{figure}

In \cite{bode:review}, Proposition \ref{rudolph} was stated differently, namely that the roots of $g_t$ form a conjugate of $\prod_{j=1}^k \sigma_{i_j}^{\epsilon_j}$. However, a closer look at the proofs in \cite{bode:review} quickly establishes that the element that we are conjugating by is the trivial braid. 


\begin{proof}[Proof of Theorem \ref{thm:twist}]
Note that we can parametrise each $Y_i$ as $(0,v_1(t),v_2(t),\ldots,v_{n-1}(t))$ such that $v_j(t)$ is constant for all $j\neq i$ and $v_i(t)$ moves counter-clockwise on an ellipse around the origin. In particular, we can choose $\tfrac{\partial \arg v_i}{\partial t}=1$. This leads to a parametrisation of $\prod_{j=1}^k X_{i_j}^{\epsilon_j}$ by composition of these parametrisations or their inverses. We can parametrise this composition such that the parts that correspond to a positive sign, i.e., $\sigma_i$ and $X_i$ rather than $\sigma_i^{-1}$ and $X_i^{-1}$, are executed in an $\varepsilon$-amount of time for an arbitrarily small $\varepsilon>0$, so that the corresponding $\tfrac{\partial \arg v_i}{\partial t}$ becomes very large. For a negative sign, i.e., $\sigma_i^{-1}$ and $X_i^{-1}$, the corresponding critical value moves clockwise. However, for each part that corresponds to a negative crossing $\sigma_i^{-1}$, the corresponding critical value $v_i(t)$ has $\tfrac{2\pi-\varepsilon}{k_{-}}$ amount of time to complete one clockwise turn around its ellipse, where $k_{-}$ is the number of negative crossings in $\prod_{j=1}^k \sigma_{i_j}^{\epsilon_j}$. This means we have a parametrisation of $\prod_{j=1}^k X_{i_j}^{\epsilon_j}$ with the property that
\begin{equation}
\label{eq:lowerbound}
\underset{t\in[0,2\pi]}{\underset{q=1,2,\ldots,s-1}{\min}} \frac{\partial\arg v_q(t)}{\partial t}\geq -\frac{2\pi}{2\pi-\varepsilon}k_{-}> -k_{-}-1.
\end{equation} 

By Proposition \ref{rudolph} this parametrisation of $\prod_{j=1}^k X_{i_j}^{\epsilon_j}$ corresponds to the strand $(0,t)\subset\mathbb{C}\times[0,2\pi]$ and the critical values $(v_1(t),v_2(t),\ldots,v_{n-1}(t))$ of a parametrised family of polynomials $g_t$, $t\in[0,2\pi]$, whose roots form the desired braid $B=\prod_{j=1}^k \sigma_{i_j}^{\epsilon_j}$.

We use the same notation for $g_t$ as in the previous sections, that is, the $j$th strand of the $i$th component of the closure of $B$ is parametrised by $(z_{i,j}(t),t)\subset\mathbb{C}\times[0,2\pi]$ for a smooth function $z_{i,j}(t):[0,2\pi]\to\mathbb{C}$. 


Consider now the braid $\Delta^{2k}B$, parametrised by $(\rme^{\rmi kt}z_{i,j}(t),t)\subset\mathbb{C}\times[0,2\pi]$, $i=1,2,\ldots,m$, $j=1,2,\ldots,n_i$, for some $k\in\mathbb{N}$. 

Note that the critical points of $\widetilde{g}_t(u)=\prod_{i=1}^{m}\prod_{j=1}^{n_i}(u-\rme^{\rmi kt}z_{i,j}(t))$ are given by $\rme^{\rmi kt}c_q(t)$, $q=1,2,\ldots,n-1$, where $c_q(t)$, $q=1,2,\ldots,n-1$, are the critical points of $g_t(u)=\prod_{i=1}^m\prod_{j=1}^{n_i}(u-z_{i,j}(t))$. Hence the critical values $v'_q(t)$ of $\widetilde{g}_t$ are given by $\rme^{\rmi nk t}v_q(t)$, $q=1,2,\ldots,n-1$.

It follows from Equation (\ref{eq:lowerbound}) that the critical values $v'_q(t)$, $q=1,2,\ldots,n-1$, of $\widetilde{g}_t$ satisfy
\begin{equation}
\label{eq:lowerbound2}
\underset{t\in[0,2\pi]}{\underset{q=1,2,\ldots,s-1}{\min}} \frac{\partial\arg v'_q(t)}{\partial t}\geq -\frac{2\pi}{2\pi-\varepsilon}k_{-}+kn> -k_{-}-1+kn.
\end{equation} 

In particular, if $k\geq \tfrac{k_{-}+1}{n}$ we have constructed a parametrisation of $\Delta^{2k}B$ as a P-fibered braid.
\end{proof}





\begin{thebibliography}{99}



\bibitem{benedetti} R. Benedetti and M. Shiota. \textit{On real algebraic links on $S^3$}. Bolletino dell'Unione Mathematica Italiana, Serie 8 Volume 1B. \textbf{3} (1998) 585--609.

\bibitem{bode:2016lemniscate} B. Bode, M. R. Dennis, D. Foster and R.P. King. \textit{Knotted fields and explicit fibrations for lemniscate knots}. Proc. R. Soc. A. \textbf{473} (2017) 20160829.



\bibitem{bode:real} B. Bode. \textit{Constructing links of isolated singularities of real polynomials $\mathbb{R}^4\to\mathbb{R}^2$}. Journal of Knot Theory and its Ramifications \textbf{28}, no. 1 (2019), 1950009.

\bibitem{adicact} B. Bode. \textit{Real algebraic links in $S^3$ and braid group actions on the set of $n$-adic integers}. Journal of Knot Theory and its Ramifications. To appear (2020). (arXiv:1903.06308)

\bibitem{bode:review} B. Bode. \textit{Braids, polynomials and real algebraic links}. Conference proceedings of The 66th Topology Symposium in Akita City (2019), 23--32. (url:https://www.mathsoc.jp/section/topology/topsymp/2019/ts2019Bode.pdf) 

\bibitem{survey} B. Bode. \textit{Real algebraic links in $S^3$ and simple branched covers}. Conference proceedings of Intelligence of Low-Dimensional Topology 2019, RIMS Kokyuroku \textbf{2129} (2019), 13--28. (url:http://www.kurims.kyoto-u.ac.jp/~kyodo/kokyuroku/contents/pdf/2129-02.pdf)

\bibitem{algo} E. A. Elrifai and H. R. Morton. \textit{Algorithms for positive braids}. Quart. J. Math. Oxford (2) \textbf{45} (1994), 479--497.



\bibitem{giroux1} E. Giroux. \textit{Géometrie de contact: de la dimension trois vers les dimensions supérieures}. Proceedings of the International Congress of Mathematicians, Vol. II (Beijing 2002), 405--414, Higher Ed. Press, Beijing (2002).

\bibitem{giroux} E. Giroux and N. Goodman. \textit{On the stable equivalence of open books in three-manifolds}. Geometry \& Topology \textbf{10} (2006), 97--114.

\bibitem{hilden} H. M. Hilden and J. M. Montesinos. \textit{Lifting surgeries to branched covering spaces}. Trans. Amer. Math. Soc. \textbf{259} (1980), 157--165.





\bibitem{mikami} M. Hirasawa, K. Murasugi and D. S. Silver. \textit{When does a satellite knot fiber?} Hiroshima Math. J. \textbf{38}, 3 (2008), 411-423.



\bibitem{looijenga} E. Looijenga. A note on polynomial isolated singularities. \textit{Ind. Math. (Proc.)} \textbf{74} (1971) 418--421.

\bibitem{milnor} J. W. Milnor. \textit{Singular points of complex hypersurfaces}. Princeton University Press (1968).

\bibitem{morton} J. M. Montesinos-Amilibia and H. R. Morton. \textit{Fibred links from closed braids}. Proceedings of the London Mathematical Society \textbf{s3-62} (1991), 167--201.

\bibitem{rampi} H. R. Morton and M. Rampichini. \textit{Mutual braiding and the band presentation of braid groups}. In Knots in Hellas 98, Proceedings of the International Conference on Knot Theory and its Ramifications, ed. C. Gordon et al., World Scientific (2000), 335--346.




\bibitem{perron} B. Perron. \textit{Le n{\oe}ud ``huit'' est alg{\'e}brique r{\'e}el}. Inv. Math. \textbf{65} (1982), 441--451.

\bibitem{pichon} A. Pichon. \textit{Real analytic germs $f\bar{g}$ and open-book decompositions of the 3-sphere}. Int. J. Math. \textbf{16} (2005), 1--12.

\bibitem{rudolph} L. Rudolph. \textit{Some knot theory of complex plane curves}. Enseign. Math. (2) \textbf{29} (1983), no. 3-4, 185--208.

\bibitem{rudolph:isolated} L. Rudolph. \textit{Isolated critical points of mappings from {$\mathbb{R}^{4}$} to {$\mathbb{R}^{2}$} and a natural splitting of the Milnor number of a classical fibred link : 1. Basic theory and examples}. Comm. Math. Helv. \textbf{62} (1987) 630--645.

\bibitem{mutual} L. Rudolph. \textit{Mutually braided open books and new invariants of fibered links}. Contemp. Math. \textbf{78} (1988), 657--673.

\bibitem{stallings} J. R. Stallings. \textit{Constructions of fibred knots and links}. Proceedings of Symposia in Pure Mathematics \textbf{32} (1978) 55--60.

\bibitem{viro} O. Viro. \textit{Encomplexing the writhe}. In Topology, Ergodic Theory, Real Algebraic Geometry. Rokhlin's Memorial. Amer. Math. Soc. Transl. ser 2, \textbf{202}, eds. V. Turaev, A. Vershik. (2001) 241--256.
\end{thebibliography}
\end{document}